\providecommand{\U}[1]{\protect\rule{.1in}{.1in}}
\numberwithin{equation}{section}
\newtheorem{theorem}{Theorem}[section]
\newtheorem{lemma}[theorem]{Lemma}
\newtheorem{corollary}[theorem]{Corollary}
\newtheorem{remark}[theorem]{Remark}
\def\<{\langle}
\def\>{\rangle}
\def\d{{\rm d}}
\def\E{\mathbb{E}}
\def\R{\mathbb{R}}
\def\RR{\mathbb{R}}
\def\eps{\varepsilon}
\newcommand\abs[1]{\left| #1 \right|}
\newcommand\norm[1]{\left\| #1 \right\|}
\begin{document}

\title{Numerical computation of probabilities for nonlinear SDEs in high dimension using Kolmogorov equation}

\author{Franco Flandoli\footnote{Email: franco.flandoli@sns.it. Scuola Normale Superiore, Piazza dei Cavalieri, 7, 56126 Pisa, Italy.}
\quad Dejun Luo\footnote{Email: luodj@amss.ac.cn. Key Laboratory of RCSDS, Academy of Mathematics and Systems Science, Chinese Academy of Sciences, Beijing 100190, China, and School of Mathematical Sciences, University of the Chinese Academy of Sciences, Beijing 100049, China.}
\quad Cristiano Ricci\footnote{Email: cristiano.ricci@sns.it. Scuola Normale Superiore, Piazza dei Cavalieri, 7, 56126 Pisa, Italy.}}
\maketitle

\begin{abstract}
Stochastic Differential Equations (SDEs)\ in high dimension, having the
structure of finite dimensional approximation of Stochastic Partial
Differential Equations (SPDEs), are considered. The aim is to compute
numerically expected values and probabilities associated to their solutions,
by solving the associated Kolmogorov equations, with a partial use of Monte
Carlo strategy - precisely, using Monte Carlo only for the linear part of the
SDE. The basic idea was presented in \cite{FLR}, but here we strongly improve
the numerical results by means of a \textit{shift} of the auxiliary Gaussian
process. For relatively simple nonlinearities, we have good results in
dimension of the order of 100.
\end{abstract}

\textbf{Keywords:} high dimensional Kolmogorov equation, numerical solution, iteration scheme, Gaussian process

\section{Introduction}

In a finite dimensional space $\mathbb{R}^{d}$, having in mind the case when
$d$ is high, we consider an SDE of the form
\begin{equation}\label{SDE}
\left\{ \aligned
\mathrm{d}X_{t}  &  =\left(  AX_{t}+B_{0}\left(
t,X_{t}\right)  \right)  \mathrm{d}t+\sqrt{Q}\, \mathrm{d}W_{t}\qquad\text{for
}t\geq t_{0},\\
X_{t_{0}}  &  =x_{0}. \endaligned \right.
\end{equation}
Here $A$ is a $d\times d$ matrix, assumed to be self-adjoint and strictly
negative definite, $Q$ is a self-adjoint and strictly positive $d\times d$
matrix (the identity, up to a constant, in all our examples), $\sqrt{Q}$ is
the square root of $Q$, $W_{t}$ is a $d$-dimensional Brownian motion defined
on a probability space $( \Omega,\mathcal{F},\mathbb{P}) $ with expectation
$\mathbb{E}$, $x_{0}\in\mathbb{R}^{d}$ and $B_{0}:\left[  0,T\right]
\times\mathbb{R}^{d}\rightarrow\mathbb{R}^{d}$ is a locally Lipschitz
continuous (uniformly in $t$) vector field with additional properties which
guarantee global existence (uniqueness coming from locally Lipschitz
assumption) of the solution denoted hereafter by $X_{t_{0},t}^{x_{0}}$. This
kind of structure, in particular the presence of the operator $A$, is inspired
by finite dimensional approximation of SPDEs and indeed, in Section \ref{subsec:spectral models}, we consider examples coming from the finite dimensional approximation of certain SPDEs. The presence of the
negative definite operator $A$ is crucial for the algorithm described below
and it is a main distinctive feature compared to \cite{Beck, BBGJJ, ChenMajda, EHJ, E
Hutz Jent Kru 1}. In \cite{FLR} it is shown that thanks to $A$, the problem
can be stated in Hilbert spaces and the convergence estimates are then
dimension independent.

The aim is to compute numerically expected values of the form $\mathbb{E}%
\big[  \phi\big(  X_{t_{0},T}^{x_{0}}\big)  \big]  $ (and probabilities
associated to $X_{t_{0},T}^{x_{0}}$, by taking $\phi$ equal to indicator
functions), by solving the corresponding Kolmogorov equation ($\left\langle
\cdot,\cdot\right\rangle $ denotes the scalar product in $\mathbb{R}^{d}$ and
$D$ the spatial derivative)
\begin{equation}\label{backward Kolmogorov}
\left\{ \aligned
\partial_{t}U+\frac{1}{2}\mathrm{Tr}%
\left(  QD^{2}U\right)  +\left\langle Ax+B_{0}\left(  t,x\right)  ,D
U\right\rangle  &  =0\qquad\text{for }t\leq T,\\
U\left(  T,x\right)   &  =\phi\left(  x\right)  \endaligned \right.
\end{equation}
due to the fact that
\[
\mathbb{E}\big[  \phi\big(  X_{t_{0},T}^{x_{0}}\big)  \big]  =U\left(  t_{0},
x_{0}\right)  .
\]
Remember that a direct solution (without Monte Carlo)\ of the Kolmogorov
equation is a typical example of \textit{curse of dimensionality};\ the key
progress of the present approach is to allow this numerical computation with
only a partial use of Monte Carlo - precisely, using Monte Carlo only for the
linear part of the SDE. The basic idea was presented in \cite{FLR}, but here
we strongly improve the numerical results by means of an auxiliary
\textit{shift} of the Gaussian process. For relatively simple nonlinearities,
we have good results in dimension of the order of 100; and much better than
those obtained without shift. Let us describe this idea.

Consider the stochastic equation (\ref{SDE}). Let $f\in C\left(  \left[
0,T\right]  ;\mathbb{R}^{d}\right)  $ be given; we rewrite the equation as%
\[
\mathrm{d}X_{t}=\left(  AX_{t}+f\left(  t\right)  +B\left(  t,X_{t}\right)
\right)  \mathrm{d}t+\sqrt{Q}\, \mathrm{d}W_{t},
\]
where%
\[
B\left(  t,x\right)  =B_{0}\left(  t,x\right)  -f\left(  t\right)  .
\]
Consider the auxiliary Gaussian process $Z_{t_{0},t}^{x_{0}}$ solution of%
\begin{equation}\label{eq:Zshifted}
\left\{ \aligned
\mathrm{d}Z_{t}  &  =\left(  AZ_{t}+f\left(  t\right)  \right)  \mathrm{d}%
t+\sqrt{Q}\mathrm{d}W_{t}\qquad\text{for }t\geq t_{0},\\
Z_{t_{0}}  &  =x_{0}.
\endaligned \right.
\end{equation}
Let $P_{t_{0},t}$ and $S_{t_{0},t}$ be the Kolmogorov evolution operators
associated to processes $X_{t_{0},t}^{x_{0}}$ and $Z_{t_{0},t}^{x_{0}}$
respectively:%
\begin{align*}
\left(  P_{t_{0},t}\phi\right)  \left(  x_{0}\right)   &  =\mathbb{E}\left[
\phi\left(  X_{t_{0},t}^{x_{0}}\right)  \right] ,\\
\left(  S_{t_{0},t}\phi\right)  \left(  x_{0}\right)   &  =\mathbb{E}\left[
\phi\left(  Z_{t_{0},t}^{x_{0}}\right)  \right]  .
\end{align*}
We are interested in their relationship because then we compute statistical
quantities associated to $X_{t_{0},t}^{x_{0}}$ using a Gaussian process. The
following lemma is essentially well known, but we provide a proof in the
Appendix for the readers' convenience.

\begin{lemma}
\label{thm-1} Assume that the drift $B\in C_{b}^{0,1} \big([0,T]\times
\mathbb{R}^{d}, \mathbb{R}^{d} \big)$; then
\begin{equation}
\label{thm-1.1}\left(  P_{t_{0},t}\phi\right)  \left(  x\right)  =\left(
S_{t_{0},t}\phi\right)  \left(  x\right)  +\int_{t_{0}}^{t} \left(
S_{t_{0},s} \left\langle B\left(  s,\cdot\right)  ,D P_{s,t}\phi\right\rangle
\right)  \left(  x\right)  \mathrm{d}s.
\end{equation}

\end{lemma}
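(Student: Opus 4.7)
The plan is to apply It\^o's formula to the function $U(s,y) := (P_{s,t}\phi)(y)$ evaluated along the auxiliary process $Z_{t_0,s}^{x}$, and exploit that $U$ satisfies the backward Kolmogorov equation for the \emph{full} process $X$. Under the stated hypotheses on $B$ (and since the linear part $AZ+f$ plus nondegenerate noise $\sqrt{Q}\,dW$ yields a smoothing semigroup), $U$ is of class $C^{1,2}$ in $[t_0,T]\times\R^d$, which justifies the It\^o step. Specifically, $U$ solves
\[
\partial_s U + \tfrac{1}{2}\mathrm{Tr}(QD^2 U) + \langle Ay + f(s) + B(s,y), DU\rangle = 0, \qquad U(t,\cdot)=\phi.
\]

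First I would apply It\^o to $s\mapsto U(s,Z_{t_0,s}^{x})$: the drift contribution $\langle AZ+f(s),DU\rangle\,ds + \tfrac12 \mathrm{Tr}(QD^2U)\,ds$ combined with $\partial_s U\,ds$ reduces, by the Kolmogorov PDE above, to the single extra term $-\langle B(s,Z_{t_0,s}^x),DU(s,Z_{t_0,s}^x)\rangle\,ds$. This produces
\[
dU(s,Z_{t_0,s}^{x}) = -\langle B(s,Z_{t_0,s}^x), DU(s,Z_{t_0,s}^x)\rangle\,ds + \langle DU(s,Z_{t_0,s}^x),\sqrt{Q}\,dW_s\rangle.
\]
Next I would integrate from $t_0$ to $t$ and take expectations. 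The stochastic integral is a true martingale (hence has zero expectation) because the Gaussian process $Z$ has all moments finite, $DU$ grows at most polynomially (from standard estimates on Kolmogorov semigroups with $C_b^{0,1}$ drift), and $\sqrt{Q}$ is a fixed matrix. Using $U(t_0,x)=(P_{t_0,t}\phi)(x)$ and $\E[U(t,Z_{t_0,t}^x)] = \E[\phi(Z_{t_0,t}^x)] = (S_{t_0,t}\phi)(x)$, one obtains
\[
(S_{t_0,t}\phi)(x) - (P_{t_0,t}\phi)(x) = -\int_{t_0}^{t} \E\!\left[\langle B(s,Z_{t_0,s}^x), DP_{s,t}\phi(Z_{t_0,s}^x)\rangle\right] ds,
\]
and recognizing the expectation as $(S_{t_0,s}\langle B(s,\cdot), DP_{s,t}\phi\rangle)(x)$ yields \eqref{thm-1.1} after rearrangement.

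The main obstacle is the regularity/integrability justification: proving rigorously that $U\in C^{1,2}$ with derivatives of at most polynomial growth so both It\^o's formula and the martingale property apply. Under $B\in C_b^{0,1}$ this is standard (parabolic Schauder-type bounds, or direct probabilistic differentiation of $P_{s,t}\phi$ via the variation process $\partial_x X_{s,t}^{x}$), but this is the only nonformal step; the rest is algebraic manipulation exploiting the PDE.
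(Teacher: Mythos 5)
Your proof is correct, and it is essentially the probabilistic face of the argument the paper gives. The paper differentiates $r\mapsto S_{t_0,r}\big(P_{r,t}\phi\big)(x)$, combining the backward Kolmogorov equation for $P_{r,t}\phi$ (Lemma \ref{append-lem-1}) with the forward equation for $S_{t_0,r}$; the second-order terms and the common drift part $\langle Ay+f(r),\cdot\rangle$ cancel, leaving $-S_{t_0,r}\langle B(r,\cdot),DP_{r,t}\phi\rangle$, and integrating in $r$ gives \eqref{thm-1.1}. You obtain exactly the same cancellation by applying It\^o's formula to $U(s,y)=P_{s,t}\phi(y)$ along the Gaussian process $Z^{x}_{t_0,s}$ and taking expectations --- which is precisely how one proves the forward equation for $S$ in the first place, so the two computations are interchangeable. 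What your route buys is that only the backward regularity statement for $P_{s,t}\phi$ is needed, with no separate forward-equation lemma for $S$; what it costs is the explicit justification of It\^o's formula and of the martingale property, which you correctly identify as the only nontrivial step. Two refinements: under $B_0\in C^{0,1}_b$, Lemma \ref{append-lem-1} yields $P_{s,t}\phi\in C^2_b(\R^d)$, so $DU$ is in fact bounded for each $s<t$ (not merely of polynomial growth) and the martingale claim is immediate; on the other hand, for merely bounded measurable $\phi$ the function $U$ is $C^{1,2}$ only on $[t_0,t)\times\R^d$ (at $s=t$ it equals $\phi$), with $\|DP_{s,t}\phi\|_\infty$ blowing up like $(t-s)^{-1/2}$, so It\^o's formula should be applied on $[t_0,t-\eps]$ and the limit $\eps\to 0$ taken at the end, the singularity being integrable. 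The paper's proof faces the same endpoint issue when integrating its derivative identity up to $r=t$ and treats it equally informally, so this is a shared technicality rather than a gap specific to your argument.
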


The presence of the shift $f\left(  t\right)  $ is the characteristic feature
of this work with respect to \cite{FLR} and it is introduced to improve
(enormously) the precision of the scheme. Although different choices are
possible, in this paper we always specify $f\left(  t\right)  $ as%
\begin{equation}\label{eq:fShift}
f\left(  t\right)  =B_{0}\left(  t,x\left(  t\right)  \right) ,
\end{equation}
where $x\left(  t\right)  $ is the unique solution of the deterministic
equation%
\begin{equation}\label{eq:odemean}
\left\{ \aligned
\frac{\mathrm{d} x\left(  t\right)  }{\mathrm{d} t}  &  =Ax\left(  t\right)
+B_{0}\left(  t,x\left(  t\right)  \right)  ,\\
x\left(  0\right)   &  =x_{0}.
\endaligned \right.
\end{equation}
Notice that equation \eqref{eq:odemean} is the deterministic counterpart of \eqref{SDE}.
The rationale behind this choice of $f(t)$ is the following: we imagine that the solution to the deterministic equation \eqref{eq:odemean} are somewhat close to the mean value of the stochastic process \eqref{SDE}. Of course this is not strictly true in some examples, depending on the shape of the nonlinearity $B_{0}$ or the matrix $Q$, but in most cases it should be a reasonable approximation. By choosing the shift $f$ as in \eqref{eq:fShift} we have that
the mean value of the stochastic process $Z_{t}$, defined in \eqref{eq:Zshifted}, coincides with the solution of the deterministic equation \eqref{eq:odemean}. Hence by this choice the Gaussian process $Z_{t}$ and the nonlinear one $X_{t}$ have expected values that are close. This will pose a significant advantage when trying to exploit the strategy described below to compute a numerical approximation of Kolmogorov equation, as described in Section \ref{sec:visualization}.

We use identity (\ref{thm-1.1}) iteratively, to approximate $\left(
P_{t_{0},t}\phi\right)  \left(  x\right)  $. However, storing the iterative
information directly (namely the function $D P_{s,t}^{n}\phi$
corresponding to the $n$ iteration) is too costly. Following \cite{FLR} we
rewrite each iteration in terms of the (shifted) Gaussian process $Z_{t_{0}%
,t}^{x_{0}}$ above. The iteration is described theoretically in the next
section and numerically in the subsequent one.

\section{The iteration scheme with the shift}\label{sec:iterationscheme}

Given $\left(  s,x\right)  \in\left[  0,T\right]  \times\mathbb{R}^{d}$, let
$(Z_{s,t}^{x})_{t\in\left[  s,T\right]  }$ be the solution to
\begin{equation}\label{eq:gaussianShift}
\mathrm{d}Z_{s,t}^{x}=(AZ_{s,t}^{x}+f(t))\,\mathrm{d}t+\sqrt{Q}\,\mathrm{d}%
W_{t},\quad t\in\left[  s,T\right]  ,\,Z_{s,s}^{x}=x.
\end{equation}
It has the expression
\begin{equation}
Z_{s,t}^{x}=e^{(t-s)A}x+F_{s,t}+W_{A}(s,t), \label{modified-Gaussian}%
\end{equation}
where $e^{tA}$ is the matrix exponential function associated to $A$,
$F_{s,t}=\int_{s}^{t}e^{(t-r)A}f(r)\,\mathrm{d}r$ and $W_{A}(s,t)$ is the
stochastic convolution
\[
W_{A}(s,t)=\int_{s}^{t}e^{(t-r)A}\sqrt{Q}\,\mathrm{d}W_{r},
\]
which has the covariance matrix $Q(s,t)=Q_{t-s}$, the latter being defined as
\[
Q_{t}=\int_{0}^{t}e^{sA}Qe^{sA^{\ast}}\,\mathrm{d}s.
\]
Under our assumptions, the matrix $Q_{t}$ is invertible for all $t\geq0$.

The formula (\ref{thm-1.1}) suggests us to consider the iteration scheme:
\[
u_{s,t}^{0}(x)=\left(  S_{s,t}\phi\right)  (x)=\mathbb{E}\left[  \phi
(Z_{s,t}^{x})\right]
\]
and, for $n\geq0$,
\begin{equation} \label{iteration}
u_{s,t}^{n+1}(x)=S_{s,t}\phi(x)+\int_{s}^{t}\left(  S_{s,r}\left\langle
B(r,\cdot),Du_{r,t}^{n}\right\rangle \right)  (x)\,\mathrm{d} r, \quad t\in (s,T].%
\end{equation}
We introduce the notation $v_{s,t}^{0}(x)=u_{s,t}^{0}(x)=\left(  S_{s,t}%
\phi\right)  (x)$ and for $n\geq1$,
\[
v_{s,t}^{n}(x)=u_{s,t}^{n}(x)-u_{s,t}^{n-1}(x),\quad x\in\mathbb{R}%
^{d},\, t\in (s,T].
\]
Then the new functions satisfy the iteration scheme below:%
$$\left\{ \aligned
v_{s,t}^{0}(x)  &  =\left(  S_{s,t}\phi\right)  (x),\\
v_{s,t}^{n+1}\left(  x\right)   &  =\int_{s}^{t}\left(  S_{s,r}k_{r,t}%
^{n}\right)  (x)\, \mathrm{d}r,\quad\text{for }n\geq0\text{, where}\\
k_{r,t}^{n}(y)  &  =\left\langle B(r,y),Dv_{r,t}^{n}(y)\right\rangle .
\endaligned \right. $$

We need the following lemma. Denote by $\mathcal{B}(\mathbb{R}^{d})$ the
family of bounded measurable functions on $\mathbb{R}^{d}$; notice that
$\phi\in\mathcal{B}(\mathbb{R}^{d})$ is not differentiated in the formula
below, an essential point also for the purpose of computing probabilities
associate to $X_{s,t}^{x}$. We introduce the notation $\Lambda(t)=
Q_{t}^{-1/2} e^{tA},\, t\geq0$.

\begin{lemma}
\label{lem-derivative} For any $h\in\mathbb{R}^{d}$, $\phi\in\mathcal{B}%
(\mathbb{R}^{d})$ and $0<s<t$,
\begin{equation}
\left\langle h,D(S_{s,t}\phi)(x)\right\rangle =\mathbb{E}\left[  \phi
(Z_{s,t}^{x})\left\langle \Lambda(t-s)h,Q_{t-s}^{-1/2}\big( Z_{s,t}%
^{x}-e^{(t-s)A}x-F_{s,t}\big) \right\rangle \right]  ,\quad x\in\mathbb{R}%
^{d}. \label{lem-derivative.1}%
\end{equation}
Note that, by the formula (\ref{modified-Gaussian}), $Q_{t-s}^{-1/2}\left(
Z_{s,t}^{x}-e^{(t-s)A}x-F_{s,t}\right)  =Q_{t-s}^{-1/2}W_{A}(s,t)$ is a
standard Gaussian random variable in $\mathbb{R}^{d}$.
\end{lemma}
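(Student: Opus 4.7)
\textbf{Plan for Lemma \ref{lem-derivative}.} The proof is a straightforward application of the Cameron--Martin/Gaussian integration by parts once one recognizes that, with $f$ fixed, $Z_{s,t}^{x}$ is Gaussian with mean depending smoothly on $x$ and with an $x$-independent covariance. The key point, made possible by the explicit representation \eqref{modified-Gaussian}, is that the dependence on $x$ enters the law of $Z_{s,t}^{x}$ only through the mean vector $m(x) := e^{(t-s)A}x + F_{s,t}$.

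First I would write $S_{s,t}\phi$ as an explicit integral against the Gaussian density,
\begin{equation*}
\left(S_{s,t}\phi\right)(x) = \frac{1}{(2\pi)^{d/2}\det(Q_{t-s})^{1/2}}\int_{\mathbb{R}^d}\phi(y)\,\exp\!\left(-\tfrac{1}{2}\bigl\langle Q_{t-s}^{-1}(y-m(x)),\,y-m(x)\bigr\rangle\right)\mathrm{d}y,
\end{equation*}
which is well defined and finite because $Q_{t-s}$ is strictly positive and $\phi$ is bounded. Since the density is $C^{\infty}$ in $m$ with derivatives that grow at most polynomially (against a Gaussian weight), and $\phi$ is bounded, the dominated convergence theorem justifies differentiation under the integral sign. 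Using $D_x m(x)=e^{(t-s)A}$ and $\nabla_{m}\log p(y;m,Q_{t-s}) = Q_{t-s}^{-1}(y-m)$, the chain rule gives
\begin{equation*}
\langle h, D(S_{s,t}\phi)(x)\rangle = \int_{\mathbb{R}^d}\phi(y)\,\bigl\langle Q_{t-s}^{-1}(y-m(x)),\,e^{(t-s)A}h\bigr\rangle\,p(y;m(x),Q_{t-s})\,\mathrm{d}y.
\end{equation*}

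Next I would rewrite this as an expectation with respect to the law of $Z_{s,t}^{x}$ and symmetrize using the self-adjointness of $Q_{t-s}^{-1/2}$:
\begin{equation*}
\bigl\langle Q_{t-s}^{-1}(Z_{s,t}^{x}-m(x)),\,e^{(t-s)A}h\bigr\rangle = \bigl\langle Q_{t-s}^{-1/2}(Z_{s,t}^{x}-m(x)),\,Q_{t-s}^{-1/2}e^{(t-s)A}h\bigr\rangle.
\end{equation*}
Recognizing $Q_{t-s}^{-1/2}e^{(t-s)A}=\Lambda(t-s)$ and $m(x)=e^{(t-s)A}x+F_{s,t}$ yields exactly \eqref{lem-derivative.1}. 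The final observation—that $Q_{t-s}^{-1/2}W_A(s,t)$ is a standard Gaussian vector—follows immediately from the definition of $Q_{t-s}$ as the covariance of $W_A(s,t)$.

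The only mild obstacle is the regularity of $\phi$: since $\phi$ is only bounded measurable, one cannot differentiate $\phi$ directly. The transfer of the derivative onto the Gaussian density, made possible by the non-degeneracy of $Q_{t-s}$ (which is the analogue of the Malliavin/Bismut integration-by-parts trick in this finite-dimensional Ornstein--Uhlenbeck setting), is precisely what avoids this issue; it also explains why the right-hand side of \eqref{lem-derivative.1} makes sense and is bounded by $\|\phi\|_\infty \,|\Lambda(t-s)h|$ in $L^{2}$.
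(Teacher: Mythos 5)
Your proposal is correct and follows essentially the same route as the paper: both exploit that $x$ enters the law of $Z_{s,t}^x$ only through the mean $e^{(t-s)A}x+F_{s,t}$ of a non-degenerate Gaussian, and differentiate the Gaussian (log-)density in the mean to transfer the derivative off the merely bounded $\phi$ (the paper phrases this via the Cameron--Martin density $\rho_{s,t}(x,y)$ with respect to the centered Gaussian $N_{Q_{t-s}}$, you via the Lebesgue density, which is a cosmetic difference). The symmetrization using self-adjointness of $Q_{t-s}^{-1/2}$ to produce $\Lambda(t-s)$ matches the paper's computation exactly.
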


The proof is given in the Appendix. Now we can prove:

\begin{corollary}
\label{cor-first-itera} One has, for $0\leq s<t\leq T$,
\begin{equation}
v_{s,t}^{1}(x)=\int_{s}^{t}\mathbb{E}\left[  \phi(Z_{s,t}^{x})\left\langle
\Lambda(t-r)B(r,Z_{s,r}^{x}),Q_{t-r}^{-1/2}\big( Z_{s,t}^{x}-e^{(t-r)A}%
Z_{s,r}^{x}-F_{r,t}\big) \right\rangle \right]  \,\mathrm{d}r.
\label{cor-first-itera.1}%
\end{equation}

\end{corollary}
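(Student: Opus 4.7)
The plan is to unravel the definition of $v_{s,t}^{1}$, substitute the formula from Lemma \ref{lem-derivative} for the gradient $Dv_{r,t}^{0}=D(S_{r,t}\phi)$, and then collapse the resulting nested expectation using the Markov (flow) property of the Gaussian process $Z$.

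First I would write the base case: since $v_{s,t}^{0}=S_{s,t}\phi$, definition \eqref{iteration} at the level of $v^{n}$ gives
\[
v_{s,t}^{1}(x)=\int_{s}^{t}(S_{s,r}k_{r,t}^{0})(x)\,\mathrm{d}r,\qquad k_{r,t}^{0}(y)=\langle B(r,y),D(S_{r,t}\phi)(y)\rangle.
\]
Next, I would apply Lemma \ref{lem-derivative} with the triple $(s,t,h)$ replaced by $(r,t,B(r,y))$, treating $B(r,y)$ as a deterministic vector for each fixed $y$. This rewrites
\[
k_{r,t}^{0}(y)=\mathbb{E}\!\left[\phi(Z_{r,t}^{y})\left\langle \Lambda(t-r)B(r,y),\,Q_{t-r}^{-1/2}\bigl(Z_{r,t}^{y}-e^{(t-r)A}y-F_{r,t}\bigr)\right\rangle\right].
\]

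The heart of the argument is then the Markov/flow property for the linear Gaussian SDE \eqref{eq:gaussianShift}. Using the same driving Brownian motion, one checks from the explicit representation \eqref{modified-Gaussian} that $Z_{r,t}^{Z_{s,r}^{x}}=Z_{s,t}^{x}$ almost surely, because $e^{(t-r)A}e^{(r-s)A}=e^{(t-s)A}$, $F_{s,t}=e^{(t-r)A}F_{s,r}+F_{r,t}$, and $W_{A}(s,t)=e^{(t-r)A}W_{A}(s,r)+W_{A}(r,t)$ with independent increments. Applying $S_{s,r}$ therefore gives
\[
(S_{s,r}k_{r,t}^{0})(x)=\mathbb{E}\!\left[\phi(Z_{s,t}^{x})\left\langle \Lambda(t-r)B(r,Z_{s,r}^{x}),\,Q_{t-r}^{-1/2}\bigl(Z_{s,t}^{x}-e^{(t-r)A}Z_{s,r}^{x}-F_{r,t}\bigr)\right\rangle\right],
\]
by first conditioning on $\mathcal{F}_{r}$ (so that $Z_{s,r}^{x}$ becomes the frozen starting point $y$ of the inner expectation) and then taking outer expectation. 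Finally, Fubini's theorem justifies exchanging the integral in $r$ with the expectation, yielding \eqref{cor-first-itera.1}.

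The only genuinely delicate point, which I expect to be the main obstacle, is this Markov-property identification: one must be careful that the inner expectation in Lemma \ref{lem-derivative} is evaluated with $y$ treated as non-random before substituting $y=Z_{s,r}^{x}$, and then re-expressed as a joint expectation over the single process $Z_{s,\cdot}^{x}$. A minor side issue is that Lemma \ref{lem-derivative} is stated for $\phi\in\mathcal{B}(\mathbb{R}^{d})$ and $h\in\mathbb{R}^{d}$ deterministic, so the application with $h=B(r,y)$ requires $B$ bounded measurable in $y$ uniformly in $r$, which is guaranteed by the standing hypothesis $B\in C_{b}^{0,1}$ used already in Lemma \ref{thm-1}.
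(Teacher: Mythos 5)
Your proposal is correct and follows essentially the same route as the paper: identify $k_{r,t}^{0}$, apply Lemma \ref{lem-derivative} with $h=B(r,y)$, and use the Markov/flow property of $Z$ to collapse the nested expectation before integrating in $r$. Your explicit verification of the flow identity $Z_{r,t}^{Z_{s,r}^{x}}=Z_{s,t}^{x}$ via the semigroup relations for $e^{tA}$, $F$, and $W_{A}$ is a slightly more detailed justification of the conditioning step that the paper handles by simply invoking the Markov property.
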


\begin{proof}
By the definition of $v_{r,t}^{0}$ and Lemma \ref{lem-derivative}, we have
\[
k_{r,t}^{0}(y)=\left\langle B(r,y),D(S_{r,t}\phi)(y)\right\rangle
=\mathbb{E}\left[  \phi(Z_{r,t}^{y})\left\langle \Lambda(t-r)B(r,y),Q_{t-r}%
^{-1/2}\big(  Z_{r,t}^{y}-e^{(t-r)A}y-F_{r,t}\big)  \right\rangle \right]
.
\]
Therefore, for $s<r<t$,%
\begin{align*}
k_{r,t}^{0}(Z_{s,r}^{x})  & =\mathbb{E}\left[  \phi(Z_{r,t}^{y})\left\langle
\Lambda(t-r)B(r,y),Q_{t-r}^{-1/2}\left(  Z_{r,t}^{y}-e^{(t-r)A}y-F_{r,t}%
\right)  \right\rangle \right]  _{y=Z_{s,r}^{x}}\\
& =\mathbb{E}\left[  \phi(Z_{s,t}^{x})\left\langle \Lambda(t-r)B(r,Z_{s,r}%
^{x}),Q_{t-r}^{-1/2}\big(  Z_{s,t}^{x}-e^{(t-r)A}Z_{s,r}^{x}-F_{r,t}\big)
\right\rangle \Big|Z_{s,r}^{x}\right]  \\
& =\mathbb{E}\left[  \phi(Z_{s,t}^{x})\left\langle \Lambda(t-r)B(r,Z_{s,r}%
^{x}),Q_{t-r}^{-1/2}\big(  Z_{s,t}^{x}-e^{(t-r)A}Z_{s,r}^{x}-F_{r,t}\big)
\right\rangle \Big|\mathcal{F}_{s,r}\right]
\end{align*}
by Markov property. Here $\mathcal{F}_{s,r}$ is the $\sigma$-algebra generated by the random variables $W_{t_2}- W_{t_1}$ with $s\leq t_1< t_2\leq r$. As a result,
\begin{align*}
v_{s,t}^{1}(x)  & =\int_{s}^{t}(S_{s,r}k_{r,t}^{0})(x)\,\mathrm{d}r=\int%
_{s}^{t}\mathbb{E}\left[  k_{r,t}^{0}(Z_{s,r}^{x})\right]  \,\mathrm{d}r\\
& =\int_{s}^{t}\mathbb{E}\left[  \phi(Z_{s,t}^{x})\left\langle \Lambda
(t-r)B(r,Z_{s,r}^{x}),Q_{t-r}^{-1/2}\big(  Z_{s,t}^{x}-e^{(t-r)A}Z_{s,r}%
^{x}-F_{r,t}\big)  \right\rangle \right]  \,\mathrm{d}r
\end{align*}
which is (\ref{cor-first-itera.1}).
\end{proof}

The general formula is given by the following theorem. The proof is identical
to the one of \cite[Corollary 2.10]{FLR}, under the shift modification as above.

\begin{theorem}
For every $n\geq1$, setting $s_{n+1}=t$, then for all $0\leq s<t$, we have
\[
\aligned
& \, v^{n}_{s,t}(x) =  \int_{s}^{t} \mathrm{d} s_{n}\int_{s}^{s_{n}}
\mathrm{d} s_{n-1} \cdots\int_{s}^{s_{2}} \mathrm{d} s_{1} \\ & \,
\mathbb{E}\Bigg[ \phi(Z^{x}_{s,t}) \prod_{i=1}^{n} \Big\<\Lambda(s_{i+1}%
-s_{i}) B(s_{i}, Z^{x}_{s,s_{i}}), Q_{s_{i+1}-s_{i}}^{-1/2} \big(Z^{x}%
_{s,s_{i+1}} - e^{(s_{i+1}-s_{i})A} Z^{x}_{s,s_{i}} - F_{s_{i},s_{i+1}}
\big) \Big> \Bigg].  \endaligned
\]
\begin{remark}\label{remark:iterativeIn}
As done in \cite[Section 3]{FLR} it is more convenient for numerical purposes to rewrite $v^{n}_{s,t}(x)$ in a different manner
\[
v^{n}_{s,t}(x) = \E\big[\phi( Z_{s,t}^x) I^{n}_{s,t}(x) \big],\quad I^{0}_{s,t}(x) \equiv 1,
\]
where, for $n\geq 0$,
\begin{equation}\label{eq:numericalI^Nrecursive}
I^{n+1}_{s,t}(x) = \int_{s}^{t}\left\langle
\Lambda(t-r)B(r,Z_{s,r}^{x}),Q_{t-r}^{-1/2}\big( Z_{s,t}^{x}-e^{(t-r)A}%
Z_{s,r}^{x}-F_{r,t}\big) \right\rangle I^{n}_{s,r}(x)\,\d r.
\end{equation}
This will allow us to compute subsequent terms of $v^{n}_{s,t}(x)$ in an iterative manner. This interpretation is also related to the work \cite{FLRGirsanov} in which the relation between the iteration scheme for Kolmogorov equation and Girsanov transformation is investigated.
\end{remark}
\end{theorem}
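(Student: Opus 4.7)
The plan is to proceed by induction on $n\geq 1$. The base case $n=1$ is precisely Corollary \ref{cor-first-itera}. For the inductive step, I assume the formula at level $n$ and exploit the recursion
\[
v^{n+1}_{s,t}(x) = \int_s^t (S_{s,r} k^n_{r,t})(x)\,\d r,\qquad k^n_{r,t}(y) = \ang{B(r,y), Dv^n_{r,t}(y)},
\]
coming from the definition of $v^{n+1}$ given just before Lemma \ref{lem-derivative}. The crux is the computation of $Dv^n_{r,t}(y)$, and a convenient vehicle is the equivalent formulation $v^n_{r,t}(y) = \mathbb{E}[\phi(Z^y_{r,t}) I^n_{r,t}(y)]$ advertised in Remark \ref{remark:iterativeIn}: there all of the $y$-dependence is gathered into a single expectation of $\phi$ against a smooth Brownian functional.

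Differentiating this expectation in $y$ and pairing with $B(r,y)$ is then done via the Bismut integration-by-parts of Lemma \ref{lem-derivative}. The derivative of a functional of the Gaussian process $Z^y_{r,\cdot}$ with respect to its initial condition produces the factor $\ang{\Lambda(t-r) B(r,y), Q_{t-r}^{-1/2}\bigl(Z^y_{r,t} - e^{(t-r)A} y - F_{r,t}\bigr)}$ multiplying $\phi(Z^y_{r,t}) I^n_{r,t}(y)$; since $y \mapsto Z^y_{r,s_i}$ is affine for every $s_i$, no extra boundary contributions appear in $I^n$ and the lemma can be applied to the whole product after a standard smooth approximation of $\phi$. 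The outer operator $S_{s,r}$ then substitutes $y = Z^x_{s,r}$ and takes expectation, and the Markov property (precisely as in the proof of Corollary \ref{cor-first-itera}) glues the pair $(Z^x_{s,r}, Z^{Z^x_{s,r}}_{r,t})$ into a single process $Z^x_{s,t}$, converting each $Z^y_{r,s_i}$ into $Z^x_{s,s_i}$.

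Relabeling the freshly introduced variable $r$ as $s_1$ and the old indices $s_1,\dots,s_n$ as $s_2,\dots,s_{n+1}=t$ yields the $(n+1)$-fold simplex $s<s_1<\cdots<s_{n+1}=t$ together with the extra $i=1$ factor in the product, which is exactly the claimed expression for $v^{n+1}_{s,t}$. The shift $f$ enters only through the appearance of $F_{s_i,s_{i+1}}$ inside each factor and the affine change of variables in Lemma \ref{lem-derivative}; everything else is formally identical to \cite[Corollary 2.10]{FLR}.

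The main technical obstacle is the legitimacy of the Bismut formula applied to the composite functional $\phi \cdot I^n$, since $\phi$ is only bounded measurable while $I^n$ itself is an iterated stochastic integral. This is handled by approximating $\phi$ by smooth cylindrical functions and passing to the limit via dominated convergence, using the Gaussian $L^p$ estimates on $Q_{t-r}^{-1/2} W_A(r,t)$ and the boundedness of $B$ assumed in Lemma \ref{thm-1}; care must be taken so that the constants degrade controllably in $n$, which is the reason the assumption $B\in C_b^{0,1}$ is imposed.
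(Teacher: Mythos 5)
Your overall architecture (induction on $n$, base case Corollary \ref{cor-first-itera}, one application of Lemma \ref{lem-derivative} per step, then gluing via the Markov property) is the intended one --- the paper itself only defers to \cite[Corollary 2.10]{FLR} --- but the central step, the computation of $Dv^{n}_{r,t}(y)$, is carried out incorrectly, and in a way that does not reproduce the stated formula. You apply Lemma \ref{lem-derivative} ``to the whole product'' $\phi(Z^{y}_{r,t})I^{n}_{r,t}(y)$ and assert that the $y$-derivative produces the terminal weight $\langle\Lambda(t-r)h,\,Q_{t-r}^{-1/2}\big(Z^{y}_{r,t}-e^{(t-r)A}y-F_{r,t}\big)\rangle$ with no further contributions because $y\mapsto Z^{y}_{r,s_i}$ is affine. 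Neither claim holds. First, $I^{n}_{r,t}(y)$ genuinely depends on $y$ through the factors $B(s_i,Z^{y}_{r,s_i})$ at intermediate times: affineness gives $D_yZ^{y}_{r,s_i}=e^{(s_i-r)A}\neq 0$, not zero, so ``no extra contributions'' is unjustified. Second, the terminal weight is not a valid integration-by-parts weight for a functional of the path at intermediate times. Test it on $G=\psi(Z^{y}_{r,u})$ with $r<u<t$: Lemma \ref{lem-derivative} gives the weight $\langle Q_{u-r}^{-1}e^{(u-r)A}h,\,W_A(r,u)\rangle$, whereas conditioning on $\mathcal{F}_{r,u}$ reduces your weight to $\langle e^{(t-u)A}Q_{t-r}^{-1}e^{(t-r)A}h,\,W_A(r,u)\rangle$, and these disagree unless $u=t$ (already for $A=-aI$, $Q=I$). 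The structural reason is that the perturbation $u\mapsto e^{(u-r)A}h$ of the initial condition solves the homogeneous equation, hence is not a Cameron--Martin direction of the driving noise, so there is no global Bismut formula with a single weight at time $t$. Concretely, your step would output the factor $\langle\Lambda(t-s_1)B(s_1,Z^{x}_{s,s_1}),\,Q_{t-s_1}^{-1/2}\big(Z^{x}_{s,t}-e^{(t-s_1)A}Z^{x}_{s,s_1}-F_{s_1,t}\big)\rangle$, pairing $s_1$ with $t$, whereas the theorem's $i=1$ factor pairs $s_1$ with the \emph{next} time $s_2$.

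The correct inductive step routes the derivative through the first intermediate time, exactly as in the proof of Corollary \ref{cor-first-itera}. By the Markov property, the induction hypothesis can be written as $v^{n}_{r,t}(y)=\int_r^t\d s_{n+1}\cdots\int_r^{s_3}\d s_2\,\big(S_{r,s_2}\,g_{s_2,\dots,s_{n+1}}\big)(y)$, where $g_{s_2,\dots,s_{n+1}}(z)=\mathbb{E}\big[\phi(Z^{z}_{s_2,t})\prod_{i=2}^{n+1}\langle\cdots\rangle\big]$ is a bounded measurable function of the state $z$ at time $s_2$ absorbing the entire $y$-dependence beyond $s_2$. Applying Lemma \ref{lem-derivative} to $S_{r,s_2}g$ yields the weight $\langle\Lambda(s_2-r)h,\,Q_{s_2-r}^{-1/2}\big(Z^{y}_{r,s_2}-e^{(s_2-r)A}y-F_{r,s_2}\big)\rangle$; un-conditioning, setting $h=B(r,y)$, applying $S_{s,r}$ and relabelling $r=s_1$ then gives precisely the consecutive pairing $\Lambda(s_{i+1}-s_i)$ of the statement. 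Your closing remarks on approximating $\phi$ and controlling constants are reasonable but cannot repair the step above.
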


\section{Numerical results}\label{sec:numericalResults}
In this section we present some numerical results obtained from the iteration scheme introduced in Section \ref{sec:iterationscheme}. As stated in introduction we have in mind the finite dimensional approximation of nonlinear SPDEs. The case we aim to solve is that of nonlinear SPDEs with additive noise
\[
\d u = (\Delta u + B(u))\,\d t + \sigma \sqrt{Q}\,\d W_t,\quad u|_{t=0} = u_{0}
\]
on $\mathbb{T}^{N} = \RR^{N}/\mathbb{Z}^{N}$. Hence in our numerical examples we will assume the operator $A$ to be a suitable discretization of the Laplacian operator $\Delta$ on $\mathbb{T}^{N}$. Moreover we will also consider the case where the dimension of the underlying space $N$ is one, and hence we will take $Q$ to be the identity operator. The parameter $\sigma$ in front of the noise is present for the sake of generality, in order to have different amplitudes for the noise. In what follows we will test the iteration scheme in Section \ref{sec:iterationscheme} by Fourier discretization. We will first describe the general strategy, and present the numerical results in Section \ref{subsec:spectral models}. In the sequel we set the starting time $t_0$ (or $s$) to be 0 and write the processes as $X^{x_0}_t$ and $Z^{x_0}_t$ etc.

We will compare the results obtained by iterations with an approximated reference solution, obtained by the standard Monte-Carlo approach. Namely, we take as an approximation of $u(T,x_{0})$ the function $u^{ref}(T,x_{0})$ computed by averaging $N_{s}$ independent samples of the Euler-Maruyama time-discretization of the process $X^{x_{0}}_{T}$:
\[
u(T,x_{0}) \approx u_{ref}^{N_{s},\Delta_{e}t}(T,x_{0}) := \frac{1}{N_{s}} \sum_{i=1}^{N_{s}} \phi\Big(X^{x_{0},i, \Delta_{e}t}_{T/\Delta_{e}t} \Big),
\]
where $X^{x_{0},i,\Delta_{e}t}_{j}$ is defined  for all $i$ and for $j=1,\dots, T/\Delta_{e}t$ (here we are implicitly assuming $T/\Delta_{e}t$ to be an integer) by
\begin{equation}\label{eq:Xeuler}
\left\{ \aligned
X^{x_{0},i,\Delta_{e}t}_{j} &= X^{x_{0},i,\Delta_{e}t}_{j-1} + \Delta_{e}t \Big( A X^{x_{0},i,\Delta_{e}t}_{j-1} + B_{0}\big((j-1)\Delta_{e}t, X^{x_{0},i, \Delta_{e}t}_{j-1} \big) \Big) \\
   &\quad +  \sqrt{\Delta_{e}t}\, \big(W^{i}_{j\Delta_{e}t} - W^{i}_{(j-1)\Delta_{e}t} \big),\\
X^{x_{0},i,\Delta_{e}t}_{0} &= x_{0},
\endaligned \right.
\end{equation}
and where $\{W^{i}_{t} \}_{i=1}^{N_s}$ are independent $d$-dimensional Brownian Motions. The reason for the notation $\Delta_{e}t$ will become clear below, when we describe the mixed-time-step strategy.

Next we describe the numerical scheme to implement the iteration procedure described in Section \ref{sec:iterationscheme}. We need to discretize the ordinary differential equation \eqref{eq:odemean} and the Gaussian process \eqref{modified-Gaussian}. We start by the discretization of \eqref{eq:odemean}: define
\begin{equation}\label{eq:odeEuler}
\begin{cases}
y^{\Delta_{e}t}_{j} = y^{\Delta_{e}t}_{j-1} + \Delta_{e}t \big(A y^{\Delta_{e}t}_{j-1} + B_{0}\big((j-1)\Delta_{e}t, y^{\Delta_{e}t}_{j-1} \big) \big) \quad j =1,\dots, T/\Delta_{e}t,\\
y^{\Delta_{e}t}_{0} = x_{0}.
\end{cases}
\end{equation}
This discretization corresponds to the classical explicit Euler scheme for ODE. As pointed out in the introduction, the system \eqref{eq:odeEuler} is the deterministic counterpart of the original SPDE problem, discretized in \eqref{eq:Xeuler}. We can now define for $j=0,\dots,T/\Delta_{e}t$
\begin{equation}\label{eq:fShiftDiscrete}
f^{\Delta_{e}t}_{j} = B_0\big(j\Delta_{e}t,y^{\Delta_{e}t}_{j} \big)
\end{equation}
that is the discrete counterpart of \eqref{eq:fShift}.

In order to implement the iteration scheme with shift presented in Section \ref{sec:iterationscheme}, we have to introduce also the discretization of the Gaussian process \eqref{modified-Gaussian}. To approximate the Gaussian process $Z_{t}$ starting from zero (i.e., $x_0=0$ and $f(t)\equiv 0$ in \eqref{eq:Zshifted}), we define for  $i = 1,\dots, N_{s}$ and for $j=1,\dots, T/\Delta_{e}t $
\begin{equation}\label{eq:Zeuler}
\begin{cases}
Z^{i,\Delta_{e}t}_{j} = Z^{i,\Delta_{e}t}_{j-1} + \Delta_{e}t A Z^{i,\Delta_{e}t}_{j-1} + \sqrt{\Delta_{e}t}\,\big(W^{i}_{j\Delta_{e}t} - W^{i}_{(j-1)\Delta_{e}t} \big),\\
Z^{i,\Delta_{e}t}_{0} = 0.
\end{cases}
\end{equation}
However, as previously done in \cite{FLR}, we decide to adopt a mixed-time-step strategy to compute approximations of $u^{n}(t,x)= u^n_{0,t}(x)$. Hence we take $\Delta_{q}t \gg \Delta_{e}t$ and perform all the numerical approximations needed to compute integrals of $v^{n}$  by using the time step $\Delta_{q}t$. Define the sampling of  $Z^{i,\Delta_{e}t}_{j}$ only at time steps that are multiple of $\Delta_{q}t$ (under the assumption that $\Delta_{q}t/\Delta_{e}t$ is an integer) by
\[
 Z^{i,\Delta_{q}t}_{j'} =  Z^{i,\Delta_{e}t}_{j' \frac{\Delta_{q}t}{\Delta_{e}t} }, \quad j' = 0,\dots, T/\Delta_{q}t .
\]
We adopt the same sampling mechanism for the shift in \eqref{eq:fShiftDiscrete}:
\[
 f^{\Delta_{q}t}_{j'} =  f^{\Delta_{e}t}_{j' \frac{\Delta_{q}t}{\Delta_{e}t} }, \quad j' = 0,\dots, T/\Delta_{q}t.
\]
Moreover, we need to introduce the discretization of the shifted process in \eqref{modified-Gaussian}. This is obtained by numerically integrating via the rectangle rule the function $ f^{\Delta_{q}t}_{j}$. For $j,k = 0,\dots, T/\Delta_{q}t $, let $F^{\Delta_{q}t}_{j,k}$ be the discretization of the function $F_{s,t}$ in \eqref{modified-Gaussian}:
\begin{equation}\label{eq:FstDiscrete}
\begin{cases}
F^{\Delta_{q}t}_{j,k} = \Delta_{q}t \sum_{l=j}^{k-1}e^{\Delta_{q}t(k-l)A}f^{\Delta_{q}t}_{l},\quad &\text{ if } k > j, \\
F^{\Delta_{q}t}_{j,k}  = 0, &\text{ otherwise },
\end{cases}
\end{equation}
where $e^{\Delta_{q}t(k-l)A}$ for $A$ a finite dimensional matrix is the classic matrix exponential. Now we can present the discretization of expression \eqref{modified-Gaussian}: for $j=0,\dots, T/\Delta_{q}t$ and $i = 1,\dots, N_{s}$,
\begin{equation}
\label{eq:Znumerics}
Z^{x_{0},i,\Delta_{q}t}_{j} = e^{j\Delta_{q}tA}x_{0} + F^{\Delta_{q}t}_{0,j} + \sigma Z^{i,\Delta_{q}t}_{j}.
\end{equation}

Finally, we can illustrate the discretized iteration procedure introduced in Remark \ref{remark:iterativeIn}. Let
\[
I^{0,x_{0},i,\Delta_{q}t,}_{j} \equiv 1 \quad \forall\, i=1,\dots,N_{s} \text{ and } j=0,\dots, T/\Delta_{q}t,
\]
and for $i = 1,\dots, N_{s}$ we define $I^{n+1,x_{0},i,\Delta_{q}t,}_{j} = 0$ if $j = 0$ and
 {\small \begin{align}\label{eq:InDiscrete}
&I^{n+1,x_{0},i,\Delta_{q}t,}_{j} \\
&= \Delta_q t \sum_{l=1}^{j} \left\langle \Lambda^{\Delta_{q}t}_{j-l+1}  B\left(l\Delta_q t,  Z^{x_{0},i,\Delta_q t}_{l}\right)
,Q_{j-l+1}^{\Delta_{q}t,-1/2}\left(  Z^{x_{0},i,\Delta_q t}_{j}-e^{(j-l+1)\Delta_{q}tA}Z^{x_{0},i,\Delta_q t}_{l} - F^{\Delta_{q}t}_{l,j}\right)   \right\rangle I^{n,x_{0},i,\Delta_q t}_{l} \nonumber
\end{align} }
for $j = 1,\dots, T/\Delta_{q}t$. In the above formula we also used the finite dimensional matrices $\Lambda^{\Delta_{q}t}_{j}$ and $Q^{\Delta_{q}t,-1/2}_{j}$, $j = 0,\dots, T/\Delta_{q}t$, which are obtained by discretizing respectively the operators $\Lambda(t)$ and $Q^{-1/2}_{t}$ in the finite dimensional case. We recall that, as stated in the introduction, the function $B$ is obtained from $B_{0}$ by subtracting the corresponding shift $f$. Hence, in the discrete formula \eqref{eq:InDiscrete} we abused a little the notation by writing $B\left(j\Delta_q t,  Z^{x_{0},i,\Delta_q t}_{j}\right)$ in place of
\[
B_{0}\left(j\Delta_q t,  Z^{x_{0},i,\Delta_q t}_{j}\right) - f^{\Delta_{q}t}_{j},\quad \forall j = 0,\dots, T/\Delta_{q}t.
\]
In the previous formula we are integrating in time each sample path coming from $Z^{x_0}_{t}$. Hence we define the approximating functions as a Monte Carlo average for each $j =1,\dots, T/\Delta_q t$:
\[
v^{n,x_{0},N_{s},\Delta_q t}_{j} = \frac{1}{N_{s}} \sum_{i=1}^{N_{s}} \phi\Big( Z^{x_{0},i,\Delta_q t}_{j} \Big) I^{n,x_{0},i,\Delta_q t}_{j},\quad u^{n,x_{0},N_{s},\Delta_q t}_{j}  =  u^{n-1,x_{0},N_{s},\Delta_q t}_{j} + v^{n,x_{0},N_{s},\Delta_q t}_{j}.
\]
Since the numerical scheme is iterative, as done in \cite{FLR}, we adopt a consecutive-iterations stopping condition. At every step of computation we measure the difference between consecutive iterations and stop when this difference is below a certain threshold $tol$. At each iteration we measure
\[
err(n) := \sup_{j= 1,\dots, T/\Delta_q t}\abs{v^{n,x_{0},N_{s},\Delta_q t}_{j}}
\]
and stop the procedure if $err(n) < tol$. In each example we will also present a plot of $err(n)$ (\emph{iterative error}), as well as a comparison with the reference solution at each step of the iteration (\emph{absolute error}) in logarithmic scale.

We point out the computational advantage of this iterative strategy compared to classical Monte Carlo. In order to compute iterations we only need to produce samples of the process $Z_{t}^{x_{0}}$. However, thanks to expression \eqref{modified-Gaussian} (or \eqref{eq:Znumerics} for its discrete counterpart) we know that it is possible to produce samples of $Z_{t}^{x_{0}}$  from independent realization of the stochastic convolution $W_A(t)= W_{A}(0,t)$. This is of great computational advantage, since it allows us to compute a fixed set of independent trajectories from $W_{A}(t)$ and use them for all future computations. In particular, we highlight that it is possible to use the same samples from $W_{A}(t)$ in every case where the linear part $A$ is kept fixed. This means that, if one wants to perform multiple simulations relative to $X^{x_{0}}_{t}$ by changing the initial condition $x_{0}$ (due to uncertainty on initial data, sensibility analysis, etc.), the nonlinear term $B(t,x)$ or even the magnitude of the noise $\sigma$, the same independent samples from $W_{A}(t)$ can be used. Hence a large amount of computational time can be saved by only computing deterministic time integration for each of the trajectories of $Z_{t}^{x_{0}}$, without having to repeat the Euler-Maruyama scheme \eqref{eq:Zeuler} each time.

We can now proceed with presenting the numerical results obtained by specializing the strategy introduced above by means of spectral methods. In each case we will detail the particular choice of numerical parameters and that of the model. Here we only specify those which are common to all the experiments performed below.
We always take $N_{s} = 10^{4}$ as the number of samples and $\Delta_{q}t =10^{-2}$ for the discretization of the algorithm. By this choice we expect the final error to be proportional to $err = 10^{-2}$ that is the case of the standard Monte Carlo approach, together with explicit Euler-Maruyama discretization. To compute reference solutions, as well as the trajectories of the Gaussian process \eqref{eq:Zeuler} we will use a value of $\Delta_{e}t$ much smaller than $\Delta_q t$, and a higher number of samples to take averages.

\begin{figure}[t]
\begin{subfigure}{0.49\textwidth}
\includegraphics[width=\textwidth]{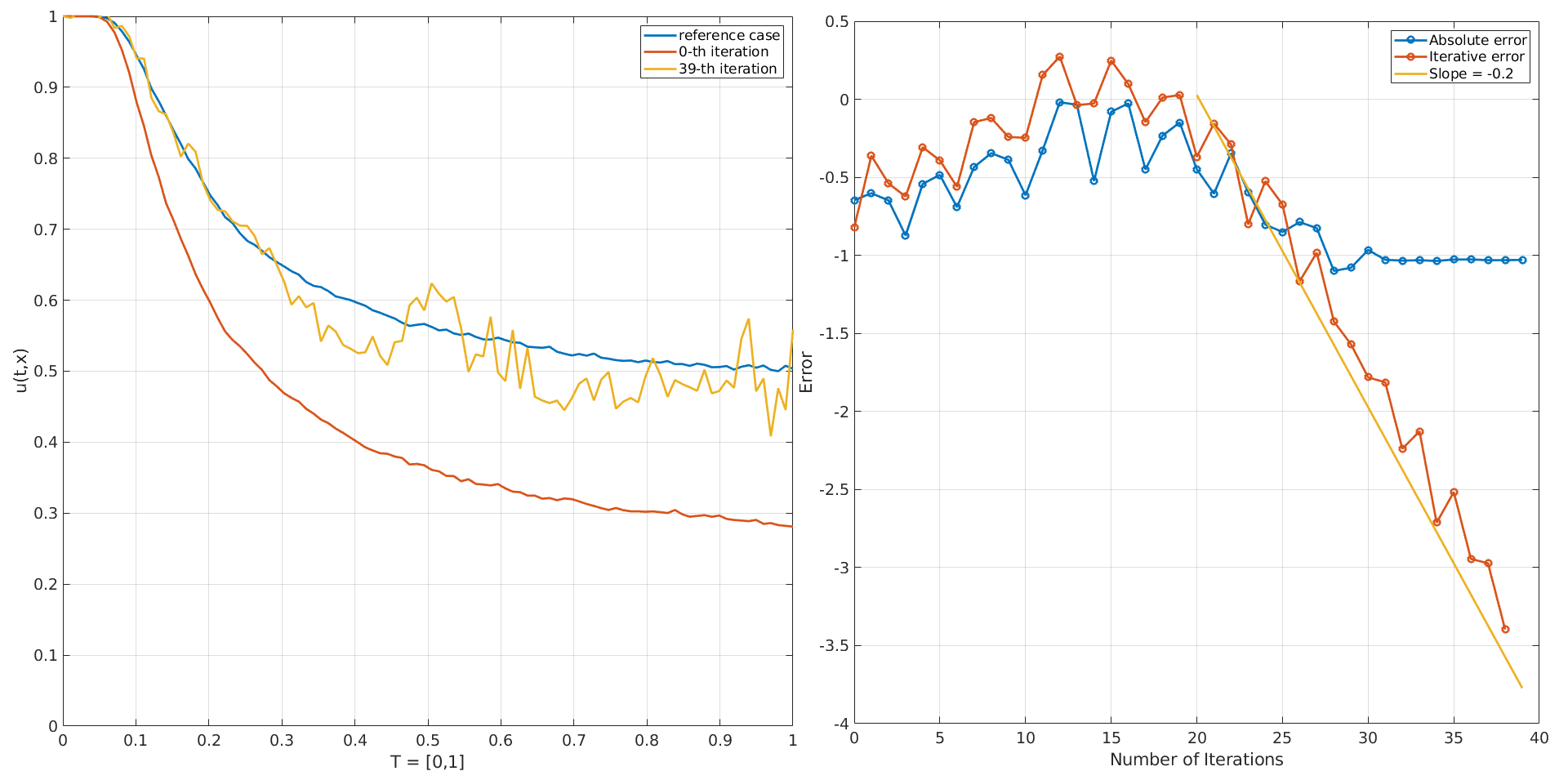}
\label{fig:cubicBoundedNoShiftd10}
\end{subfigure}
\begin{subfigure}{0.49\textwidth}
\includegraphics[width=\textwidth]{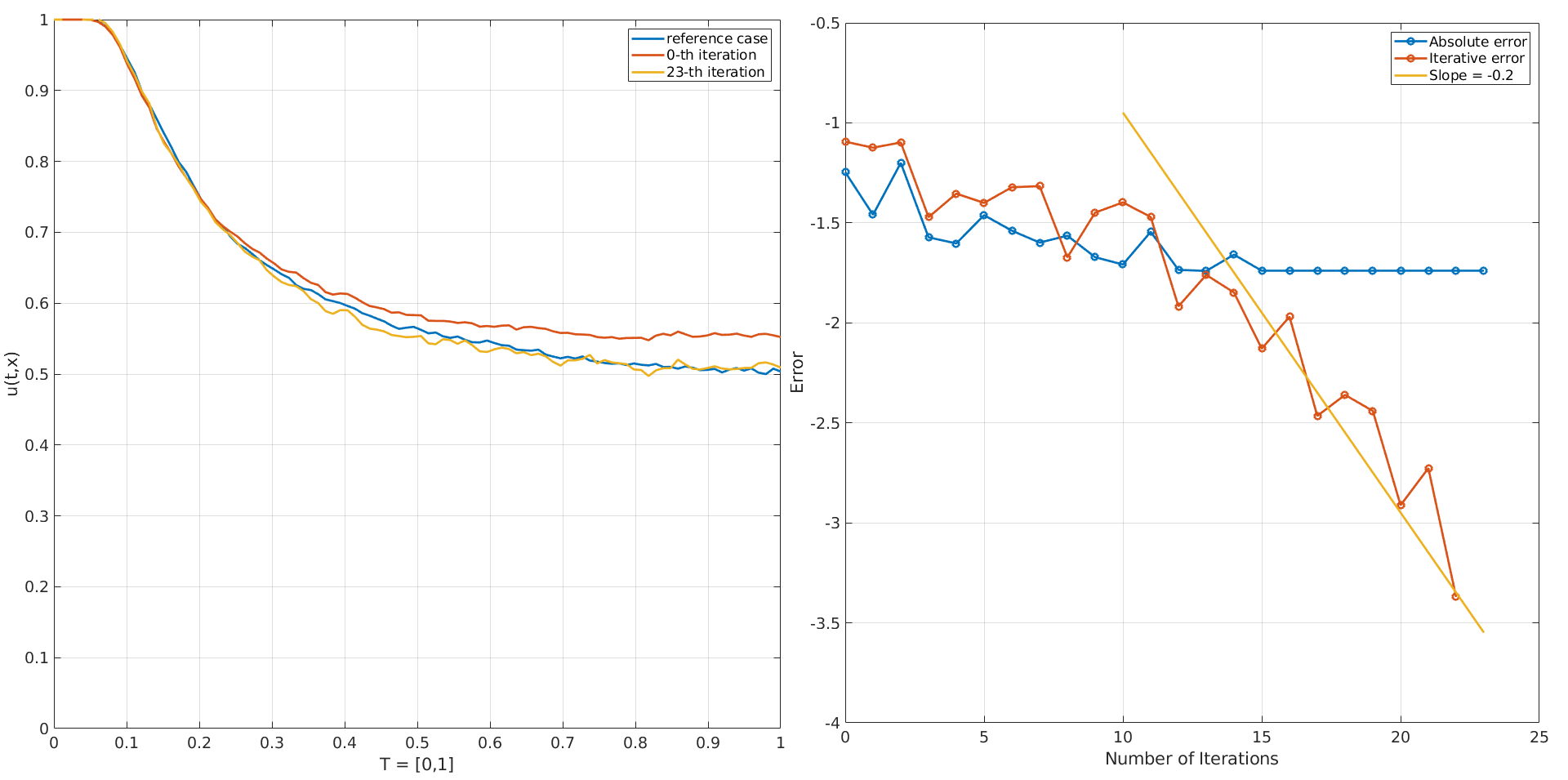}
\label{fig:cubicBoundedShiftd100}
\end{subfigure}
\caption{Trajectories of $u(t,x_{0})$ for $t \in [0,1]$, and plot of the error in $\log_{10}$ scale as a function of the number of iterations. Left block: cubic bounded  case \eqref{eq:polynomialbounded} in dimension $d = 10$ without the use of the shift. We see that the solution exhibits oscillations in time making the result quite inaccurate. Right block: cubic bounded  case \eqref{eq:polynomialbounded} in dimension $d = 100$ with the addition of the shift. Here we see that, even if the dimension is much larger than the figure on Left, the result is much more stable, and the final error decreases down to the value $0.02$. For both cases the initial condition has been taken as $\phi(x) = \mathds{1}_{\norm{x}_{2}\geq 1}$ and $x_{0} = \mathbf{e}$.}
\label{fig:cubicBoundedNoShiftd10+cubicBoundedShiftd100}
\end{figure}

\subsection{Spectral models}\label{subsec:spectral models}
In this section we present some numerical results relative to the proposed iterative method introduced in Section \ref{sec:iterationscheme}, by using spectral discretization. As stated at the beginning of this section, we will always assume the dimension $N$ of the underlining space to be one. Hence we will call $d$ the dimension of the discrete problem, that is, the number of Fourier modes that we are considering.
In this setup the matrix $A$ will then be a diagonal matrix, with entries $A_{k,k}  = -k^{2}$, and $A_{k,j} = 0$ for all $k \neq j$. Moreover we also take the value of $\sigma$, introduced at the beginning of Section \ref{sec:numericalResults} to be equal to one.

Regarding the choice of nonlinearities $B$ we have mainly in mind to improve the work done in \cite{FLR}, where the results are not completely satisfactory. We will show that, with the current modification given by the shift of the Gaussian process, the iterative scheme is able to tackle a broader class of problems.

We start by analyzing the case where $B$ is the following  polynomial vector field
\begin{equation}\label{eq:polynomialbounded}
B(x)_{i} = b_{0}\norm{\overline{y}}_{\infty}\frac{(\overline{y}_{i}-x_{i})\abs{\overline{y}_{i}-x_{i}}^{2}}{b_{0}\norm{\overline{y}}_{\infty}+\norm{\overline{y}-x}_{\infty}^{3}},\quad i=1,\dots,d
\end{equation}
where $\overline{y} \in \R^{d}$ is fixed to the value $2\mathbf{e}$, $\mathbf{e}$ being the vector with all components equal to one, and $b_{0} = 2$. This case consists in a standard cubic nonlinearity  but with the addition of a cut-off for large values of $\norm{x}$.  We highlight that, in this case, by the addition of the shift to the Gaussian process, the results of our numerical experiments are satisfactory up to dimension $d = 100$. See Figure \ref{fig:cubicBoundedNoShiftd10+cubicBoundedShiftd100} for a comparison with the same nonlinearity in dimension $d=10$ but without the use of the shift.

Next, we try to stress the method presented in this manuscript by testing some nonlinearities $B$ which fall outside of the present theory. We test the method for the vector field
\begin{equation}\label{eq:quadraticsimple}
B(x)_{i} = b_{0}(\overline{y}_{i}-x_{i})\abs{\overline{y}_{i}-x_{i}},\quad i=1,\dots, d
\end{equation}
without any kind of renormalization and $b_{0} = 1$. We remark that in order to prove convergence of the iteration scheme in its full generality, in \cite{FLR} we had to assume $B$ to be bounded. Hence, here we are going beyond that hypothesis by choosing $B$ not only to be unbounded, but of quadratic growth. This choice was driven by the desire to be able to tackle fluid dynamical problems where nonlinearities are typically of quadratic type. Of course here the situation is simpler since there is no mixing between the components, but it can be seen as a first step in that direction. In this case the results obtained by using the proposed algorithm are shown in Figure \ref{fig:QuadraticSimpleShifterr003d10}. Here we remark that we do not show the comparison with the case without the shift of the Gaussian process since in that case there is no convergence at all, and iterations blow up after a few steps.
\begin{figure}[t]
\centering
\includegraphics[width=0.7\textwidth]{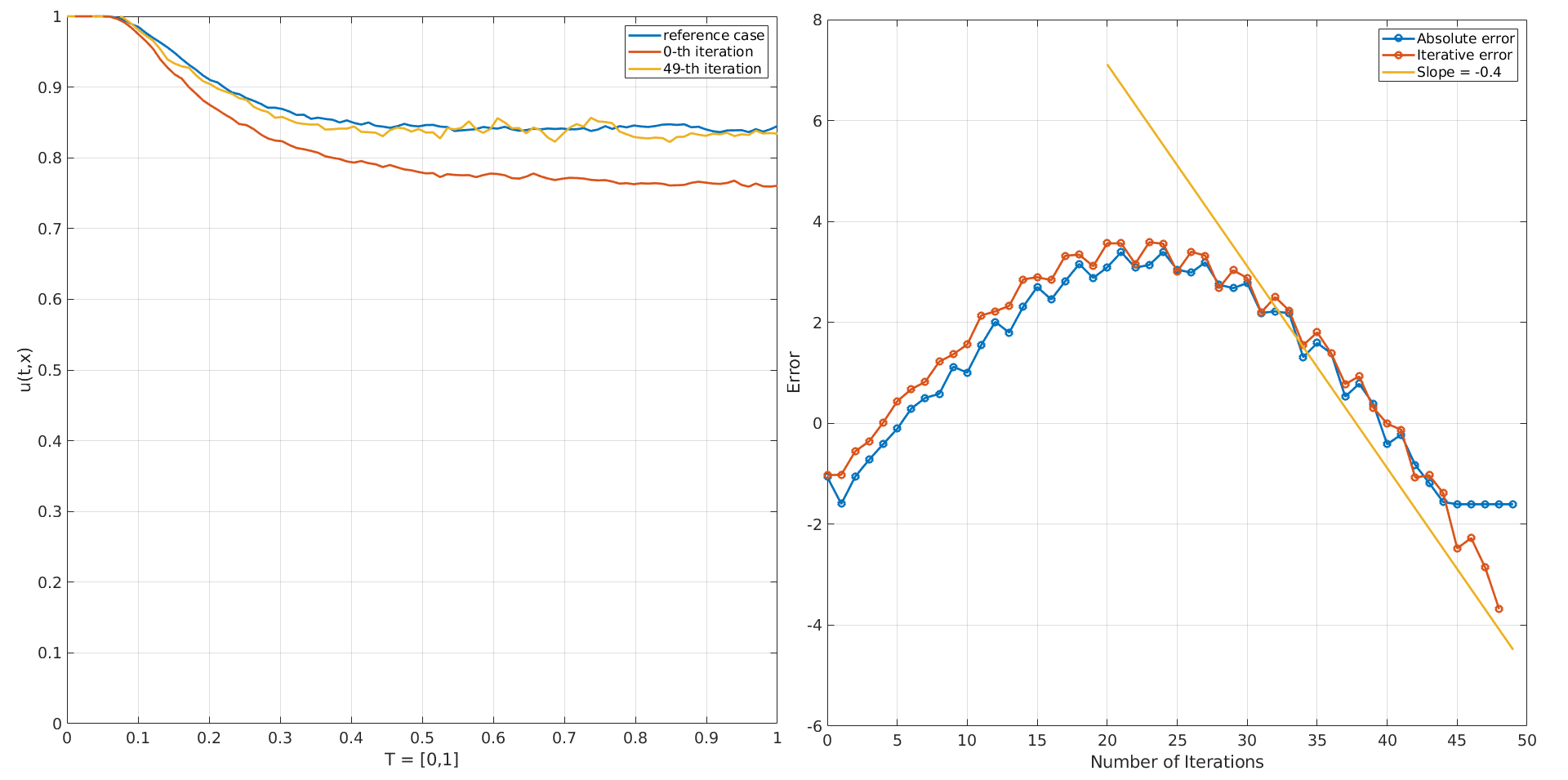}
\caption{Trajectories of $u(t,x_{0})$ for $t \in [0,1]$, and plot of the error in $\log_{10}$ scale as a function of the number of iterations. Strictly quadratic case \eqref{eq:quadraticsimple} in dimension $d = 10$. We see that the error first rises up and reaches values of order $10^{4}$ while at the final iteration is of order $10^{-2}$. The initial condition has been taken as $\phi(x) = \mathds{1}_{\norm{x}_{2}\geq 1}$ and $x_{0} = \mathbf{e}$. }
\label{fig:QuadraticSimpleShifterr003d10}
\end{figure}

Finally we test the method on an even more difficult situation. As we said the final aim of this research is to go into the direction of solving problems related to fluid dynamics (e.g. in climate studies). Hence we select a simple but profound approximation of Fourier modes for the Navier-Stokes equations: the \emph{Dyadic model}. We consider
\begin{equation}\label{eq:dyadicmodel}
\left\{ \aligned
B(x)_{1} &= F_{1} -k_{1}x_{1}x_{2},\\
B(x)_{i} &= k_{i-1}x_{i-1}^{2}-k_{i}x_{i}x_{i+1},\quad i=2,\dots,d-1,\\
B(x)_{d} &= k_{d-1}x_{d-1}^{2},
\endaligned \right.
\end{equation}
where $k_{i} = \lambda^{2i}$ for $i=1,\dots,d$ and $F_{1}$ is a positive constant. In this specific case, we choose the matrix $A$ to be diagonal with diagonal entries $A_{i,i} = -k_{i}$. The rationale of the previous choice is the following: energy is transferred from lower Fourier modes (corresponding to low index components) to higher ones. The forcing term $F_{1}$ is added in order to insert new energy into the system, while the condition for $i=d$ is just the boundary condition for dealing with a finite number of modes instead of an infinite one, see \cite[Chapter 3]{FSaintFlour} for a full discussion.

This model still behaves like that in \eqref{eq:quadraticsimple}: it is of quadratic growth but is much more complicated, since it involves transportation of energy between modes and many other phenomena.
In this case we test the method only in $d = 10$, see Figure \ref{fig:dyadicd=10u0mean}. The results are only partially good: at first we see an improvement of the error with respect to the reference solution on the first iteration, which then degenerates in further approximations. However we point out that this result is a first step in the direction of fluid dynamical problems, and we can obtain such a result only by applying the method described in the current paper. On the contrary, the simpler method described in \cite{FLR} was only able to treat less relevant problems.

\begin{figure}[t]
\centering
\includegraphics[width=0.7\textwidth]{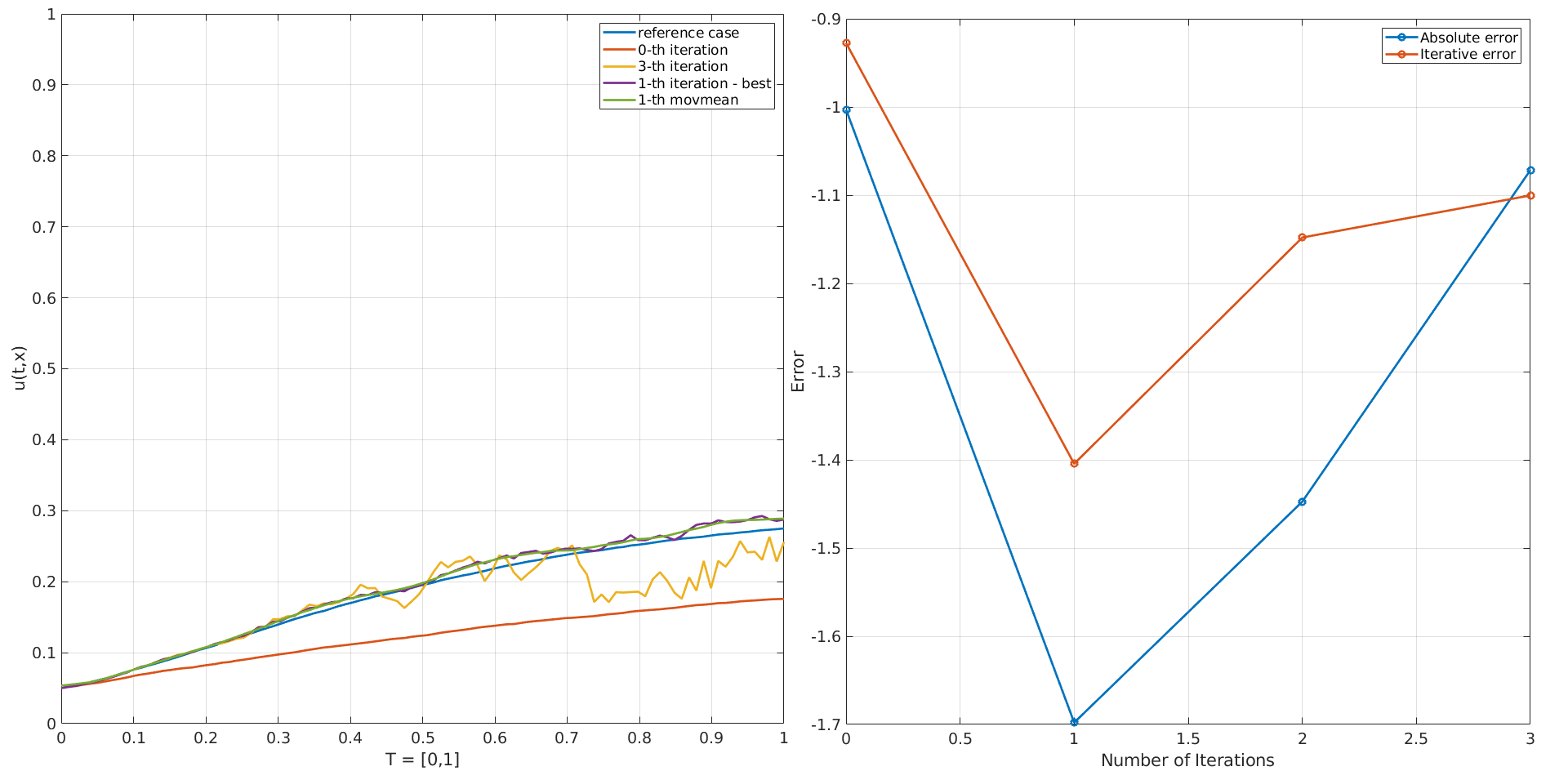}
\caption{Trajectories of $u(t,x_{0})$ for $t \in [0,1]$, and plot of the error in $\log_{10}$ scale as a function of the number of iterations. Dyadic case \eqref{eq:dyadicmodel} in dimension $d = 10$. The green line labeled by \emph{movmean} means that a time average as been applied to smooth the solution. We see that the first iteration improve the result, even if further iterates degenerate. The initial condition has been taken as $\phi(x) = \frac{1}{d} \sum_{i=1}^{d}x_{i} $, the value of $\lambda$ is set to $1.1$, $F_{1} = 2$ and $x_{0}  = \mathbf{e}_{1}$. }
\label{fig:dyadicd=10u0mean}
\end{figure}

\section{The probability distribution}\label{sec:visualization}

When working with SDEs, sometimes we are interested in the numerical value of
some specific expected value or probability, which is investigated in the
previous section. Sometime else we would like to have a graphical
representation of the probability law. Kolmogorov equation a priori does not
seem to be the right tool for such a purpose; the best one looks the Fokker-Planck equation. We
could rewrite the Fokker-Planck equation as a modified Kolmogorov equation with
a zero-order term and devise an iterative Gaussian approximation similarly to
above. However, such approach is not immediately useful, since it aims to
compute the pointwise values of a probability density in a very high
dimensional space, facing the problem of how many points and which one should
be computed to have interesting informations.

In addition, we should not forget that a visualization requires projecting the
solution on $k$-dimensional space with $k=1,2$ (at most $k=3$ if isosurfaces
are plotted). It is not clear at all that solving Fokker-Planck equation is
the right way to get such information.

One alternative is to choose the low-dimensional projection, say $k=2$ to fix
the ideas. We divide the plane in $N$ sets and compute the probability that
$X^{x_0}_{t}$ takes values in each one of these sets by means of $N$ solutions of
Kolmogorov equation. Since only the function $\phi$ changes, one can rearrange
the numerical code saving most of the computations;\ the final cost is not so
larger than the cost of a single simulation of Kolmogorov equation, if $N$ is
not too large. We have done simulations of this kind and the results are good
but looks very approximate, due to the moderate cardinality $N$ of the
partition (see Figure \ref{fig:2dHist+surfSmooth} with $N=1000$).
\begin{figure}[t]
\begin{subfigure}{0.49\textwidth}
\includegraphics[width=\textwidth]{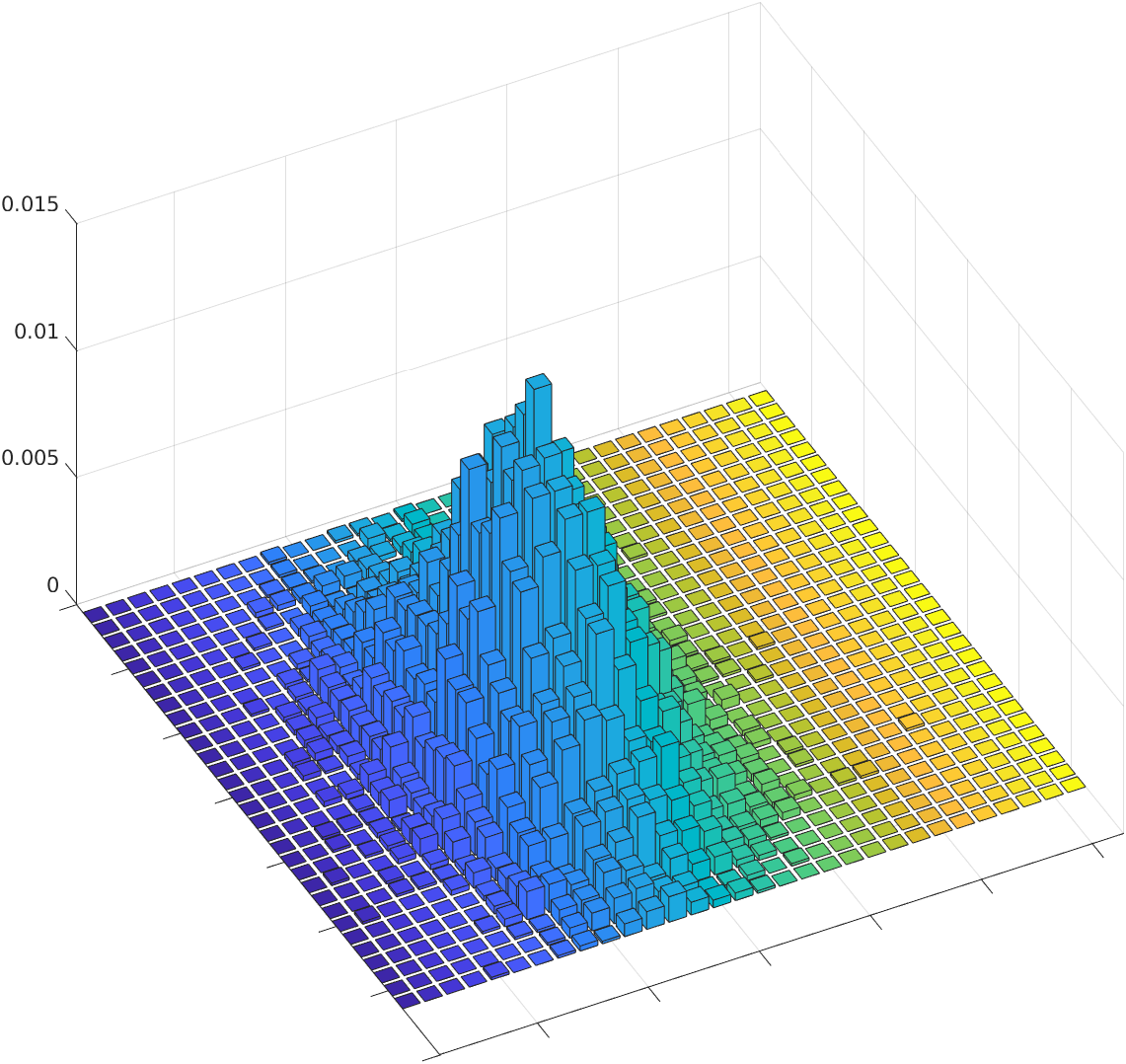}
\label{fig:2dHist}
\end{subfigure}
\begin{subfigure}{0.49\textwidth}
\includegraphics[width=\textwidth]{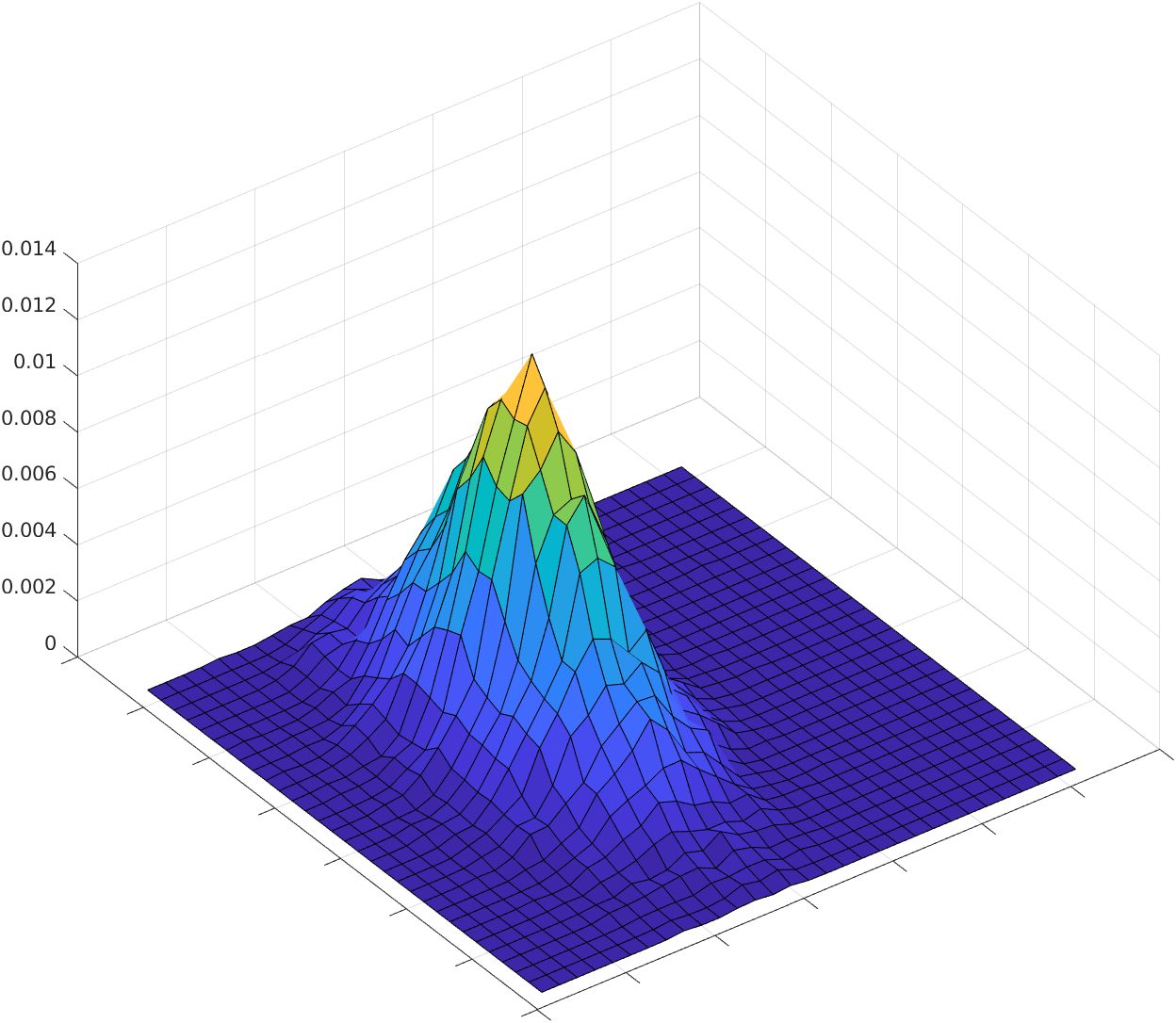}
\label{fig:surfSmooth}
\end{subfigure}
\caption{First two Fourier components in the cubic bounded case \eqref{eq:polynomialbounded}. Each square grid has a height corresponding to the probability for the variable $X^{x_0}_{t}$ to be inside that square. Each values has been computed by means of the approximation scheme described in Section \ref{sec:iterationscheme} by choosing the function $\phi$ as the indicator function of the corresponding square grid. In the figure on the right a density-like plot instead of a histogram has been used to give a better feeling of the resulting probability distribution.}
\label{fig:2dHist+surfSmooth}
\end{figure}

Next, in order to understand the mechanism behind the Gaussian approximation obtained via Kolmogorov equation, we rewrite the approximants $v^n(t,x_0)= v^{n}_{0,t}(x_0)$ as in Remark \ref{remark:iterativeIn} and approximate the mean by taking averages over independent samples:
\[
v^{n}(t,x_{0}) = \E\big[\phi( Z_{t}^{x_{0}}) I^{n}_{t}(x_{0}) \big] \approx \frac{1}{N_{s}} \sum_{i=1}^{N_{s}} \phi\Big( Z^{x_{0},i,\Delta_q t}_{j} \Big) I^{n,x_{0},i,\Delta_q t}_{j} .
\]
Moreover, we also have
\begin{equation}
\label{eq:reweighting}
u^{n}(t,x_{0}) = \sum_{k=0}^{n} v^{k}(t,x_{0})
= \E\bigg[\phi( Z_{t}^{x_{0}}) \sum_{k=0}^{n} I^{k}_{t}(x_{0}) \bigg] \approx \frac{1}{N_{s}} \sum_{i=1}^{N_{s}} \phi\Big( Z^{x_{0},i,\Delta_q t}_{j} \Big) \bigg[\sum_{k=0}^{n} I^{k,x_{0},i,\Delta_q t}_{j}\bigg].
\end{equation}
This last expression has to be compared to that related to classical Monte Carlo method applied to the nonlinear process $X^{x_0}_{t}$:
\[
u(t,x_{0}) = \E{\phi(X_{t}^{x_{0}}) } = \frac{1}{N_{s}} \sum_{i=1}^{N_{s}} \phi\Big( X^{x_{0},i,\Delta_q t}_{j} \Big).
\]
Hence, we see the parallelism between the two: the iteration scheme obtained by Kolmogorov equation produces samples from a Gaussian process $Z_{t}^{x_{0}}$ that should be reweighted by means of the iterated integral expression $\sum_{k=0}^{n} I^{k}_{t}(x_{0})$. This sum expresses the order of accuracy of the weighting procedure, by having $I^{0}_{t}(x_{0}) \equiv 1$ and leading to the limit $u^{n}(t,x_{0}) \to u(t,x_{0})$ when $n$ tends to infinity. From this interpretation we see how the Gaussian process $Z^{x_0}_{t}$ represents the core of the initial approximation of the solution $X^{x_0}_{t}$. The relations with the classical topic of importance sampling for Monte Carlo method is also evident (see \cite{Importance} for an introduction): to compute quantities related to a ``difficult'' distribution such as that of $X^{x_0}_{t}$, we use samples from a simpler one such as $Z^{x_0}_{t}$ and adapt the probability measure.

In the sequel we investigate what are the effects of the shift of the Gaussian process introduced at the beginning of the paper. As we said the Gaussian process can be seen as a first guess of $X^{x_0}_{t}$. Hence we analyze this initial approximation by comparing samples coming from the processes $Z^{x_0}_{t}$ and $X^{x_0}_{t}$. As stated before, in order to visualize our problem, we have to restrict the available information and project every sample on a two dimensional space. In Figure \ref{fig:cloud1+cloud2} we compare samples of only the first two components from the cubic bounded case \eqref{eq:polynomialbounded} obtained with or without shifting the Gaussian process.  We can see that the initial approximation of $X^{x_0}_{t}$ obtained with shift is much better than that without the shift.
\begin{figure}[t]
\begin{subfigure}{0.49\textwidth}
\includegraphics[width=\textwidth]{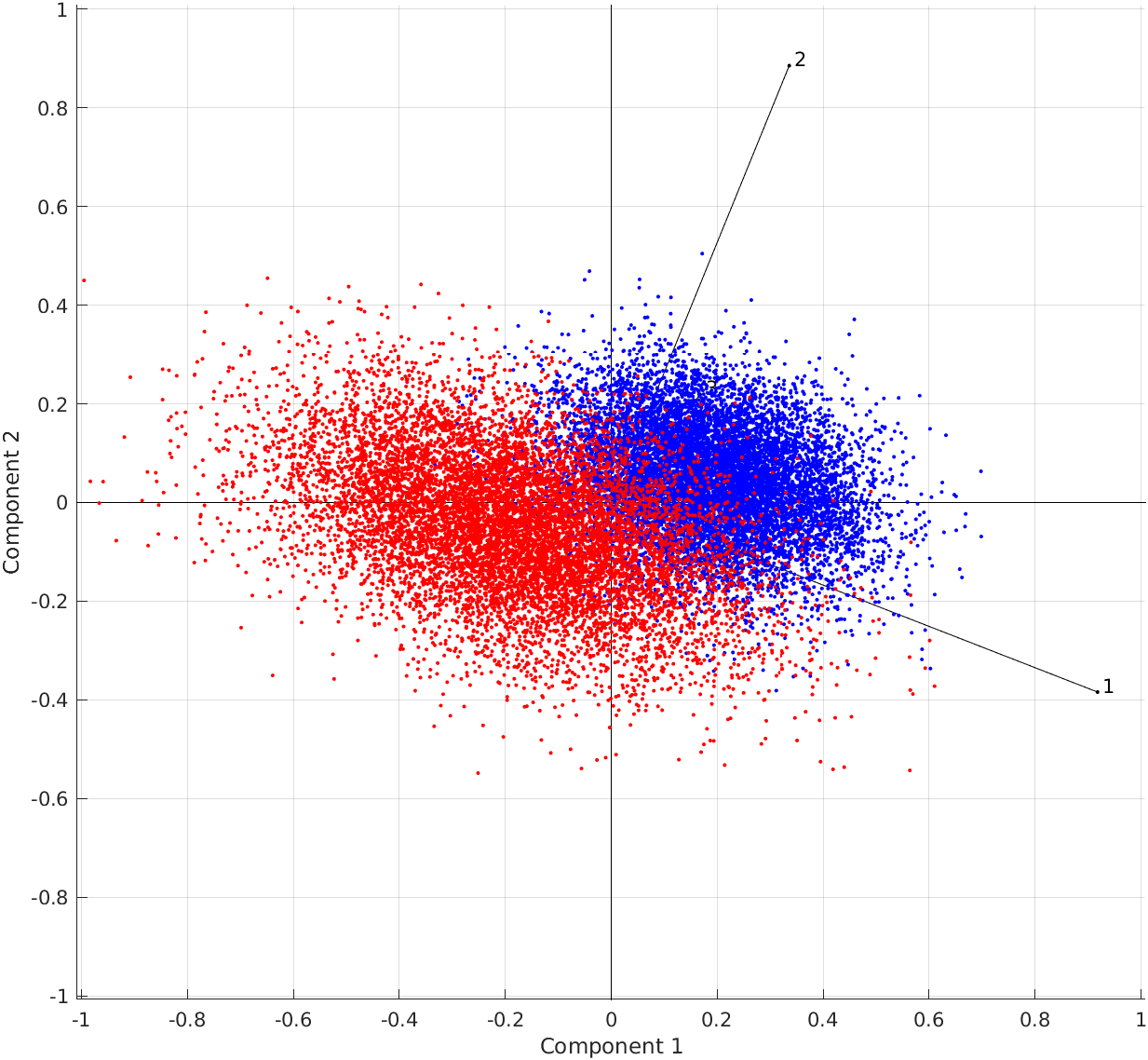}
\label{fig:cloud1}
\end{subfigure}
\begin{subfigure}{0.49\textwidth}
\includegraphics[width=\textwidth]{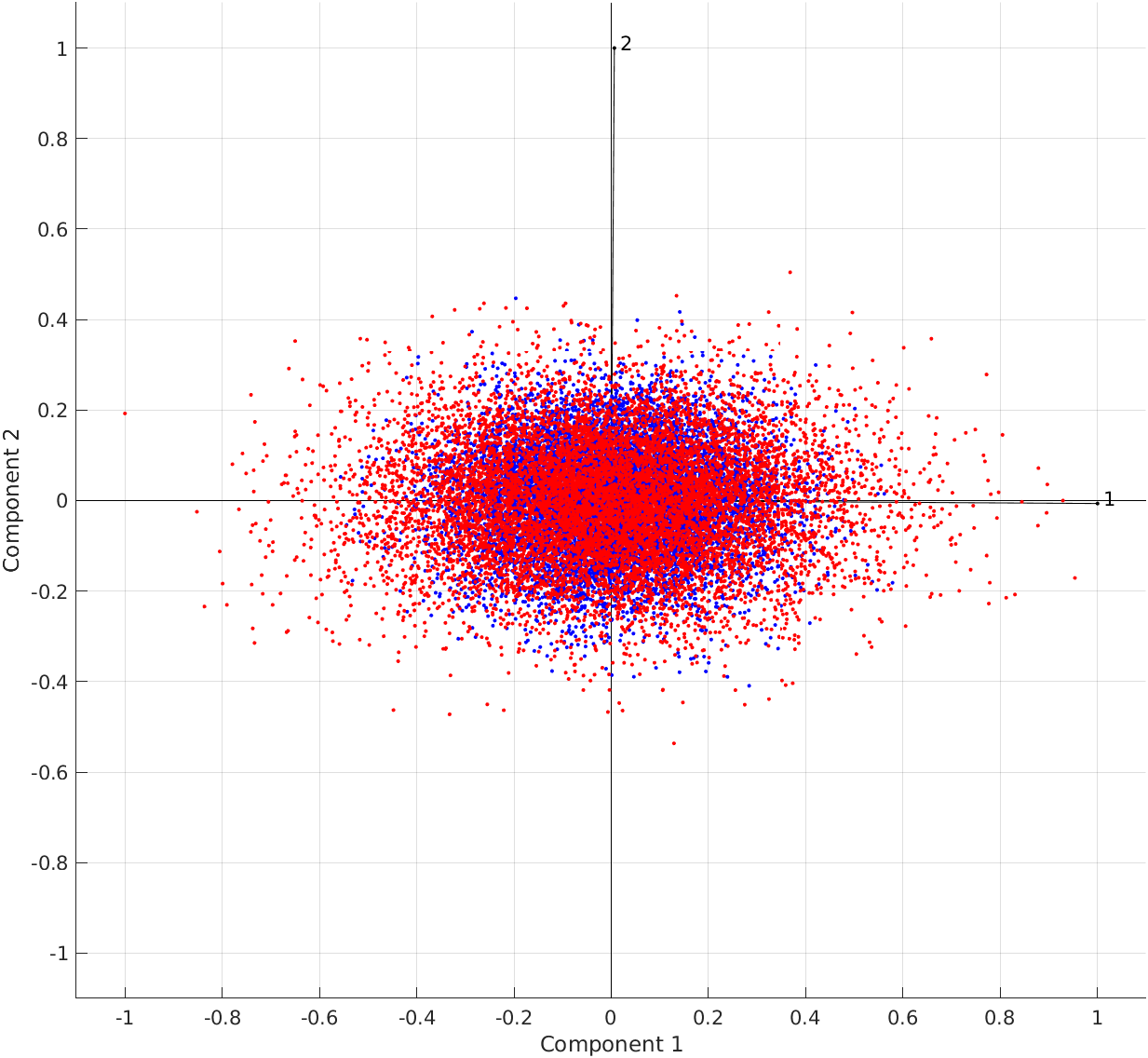}
\label{fig:cloud2}
\end{subfigure}
\caption{First two Fourier components in the cubic bounded case \eqref{eq:polynomialbounded}. Each point correspond to an independent realization of the nonlinear process $X^{x_0}_{t}$ (blue) or of the linear one $Z^{x_0}_{t}$ (red). On the left we see the result by computing samples without the shifting the gaussian process while on the right we see the effects of the shift. As we can see on the right the samples coming from the gaussian process with shift are much closer to the samples of non linear process, providing a better initial approximation for the iteration scheme.}
\label{fig:cloud1+cloud2}
\end{figure}

As we saw in Section \ref{subsec:spectral models} the closeness of the initial approximation is the key to obtain a good approximation through iterations. This can be appreciated even further by the use of histograms. In Figure \ref{fig:noshift3} we show the distributions of the processes involved by projecting only on the first Fourier component. The yellow histogram has been obtained by using samples coming from the Gaussian process $Z^{x_0}_{t}$ by giving different
weight to each samples following expression \eqref{eq:reweighting}. We see that the addition of the shift to the initial Gaussian approximation makes the final result much better than in the case without any shift. In particular in Figure \ref{fig:noshift3} we see how the histogram colored in yellow, obtained by the Gaussian process plus the reweighting process, is not able to cover the entire support of the target distribution in blue. This is not the case in Figure \ref{fig:shift2+shift3} where, since the Gaussian distribution (red) is able to cover the empirical support of the nonlinear process (blue), the approximation is of higher precision. The same phenomenon can be observed in Figure \ref{fig:cloud1+cloud2}.
\begin{figure}[t]
\centering
\includegraphics[width=0.5\textwidth]{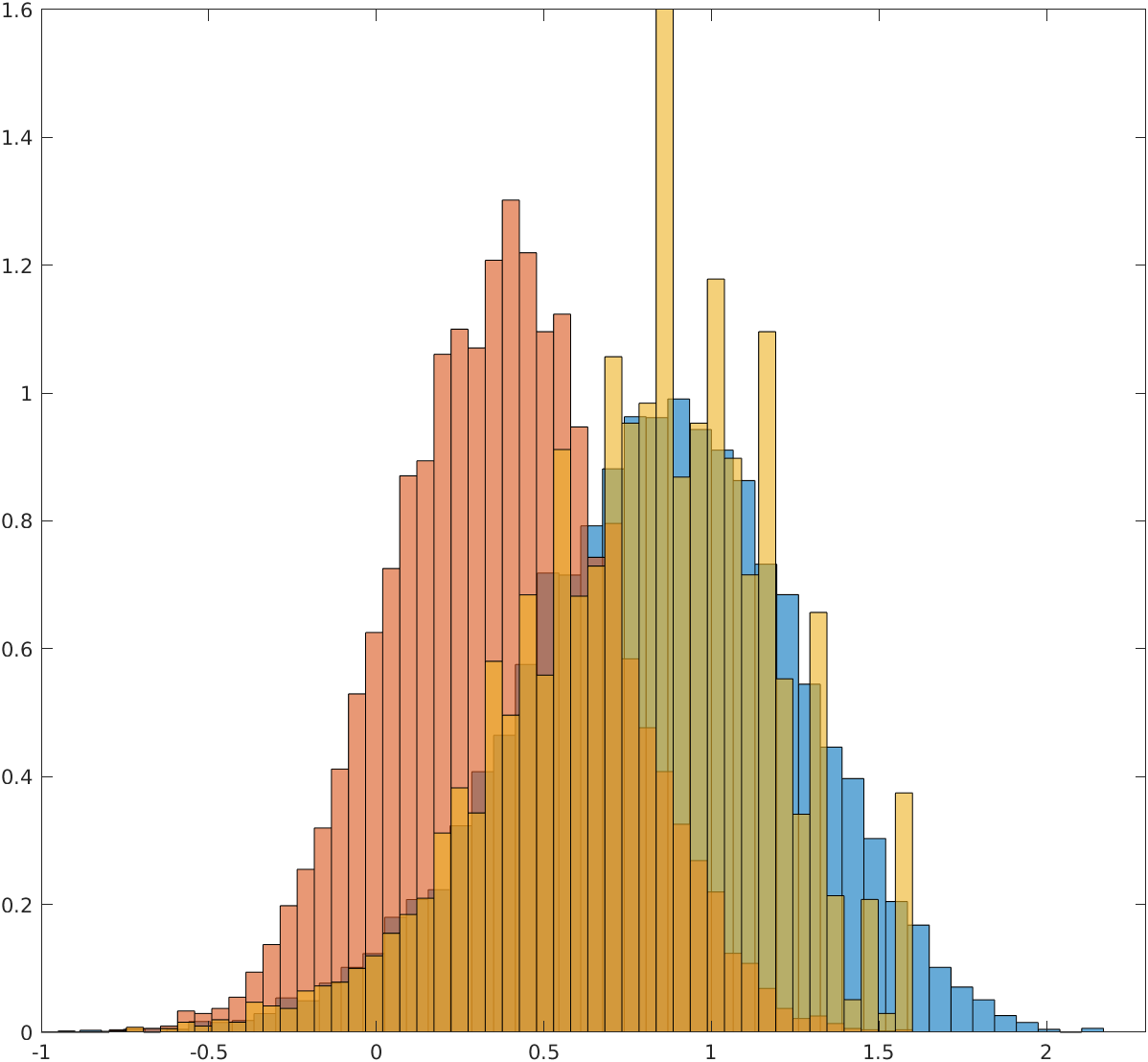}
\caption{Histograms of the first Fourier component of the nonlinear process $X^{x_0}_{t}$ (blue), Gaussian process without shift $Z^{x_0}_{t}$ (red), and Gaussian process after the operation of reweighting \eqref{eq:reweighting} (yellow), in the polynomial bounded case \eqref{eq:polynomialbounded}.}
\label{fig:noshift3}
\end{figure}

However, the previous types of plots are not able to grasp the difficulties of more intricate problems. In particular, in the example of Dyadic model of turbulence \eqref{eq:dyadicmodel}, not only is the problem nonlinear, but there is also high transfer of energy between modes, projecting on only one component is not enough to catch the whole phenomenon. Hence we produced samples from the nonlinear process and Gaussian process in dimension $d=20$ and performed \emph{principal component analysis} to visualize the samples. The results are presented in Figure \ref{fig:banana1+banana2}. We can see that, whether taking advantage of the shift or not, the initial approximations given by the Gaussian processes are very poor. In fact the particular shape of the nonlinear process doesn't allow an appropriate reconstruction of its distribution. This is perhaps a limitation of the current method that we hope to improve in a future research.

\begin{figure}[t]
\begin{subfigure}{0.49\textwidth}
\includegraphics[width=\textwidth]{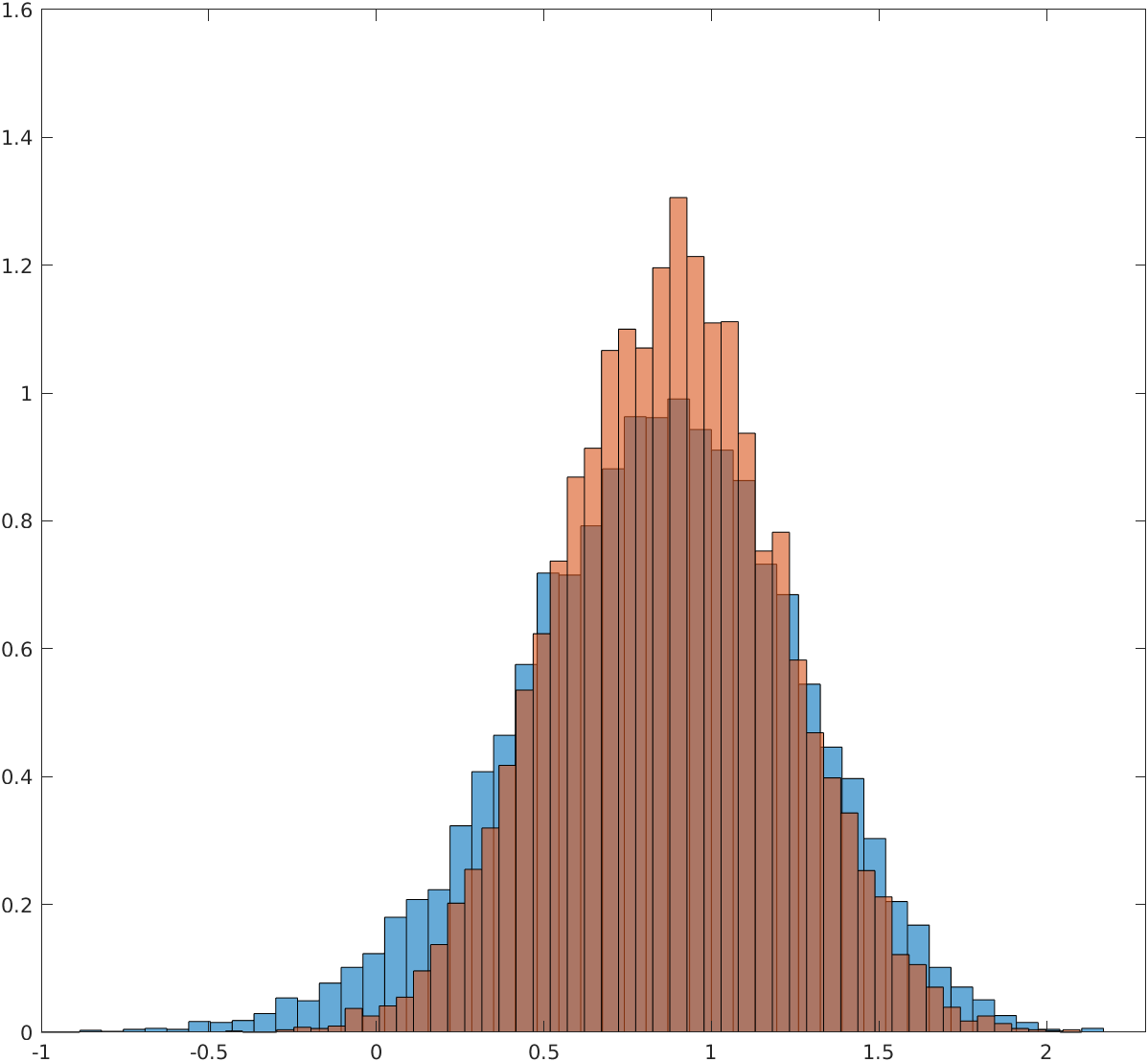}
\label{fig:shift2}
\end{subfigure}
\begin{subfigure}{0.49\textwidth}
\includegraphics[width=\textwidth]{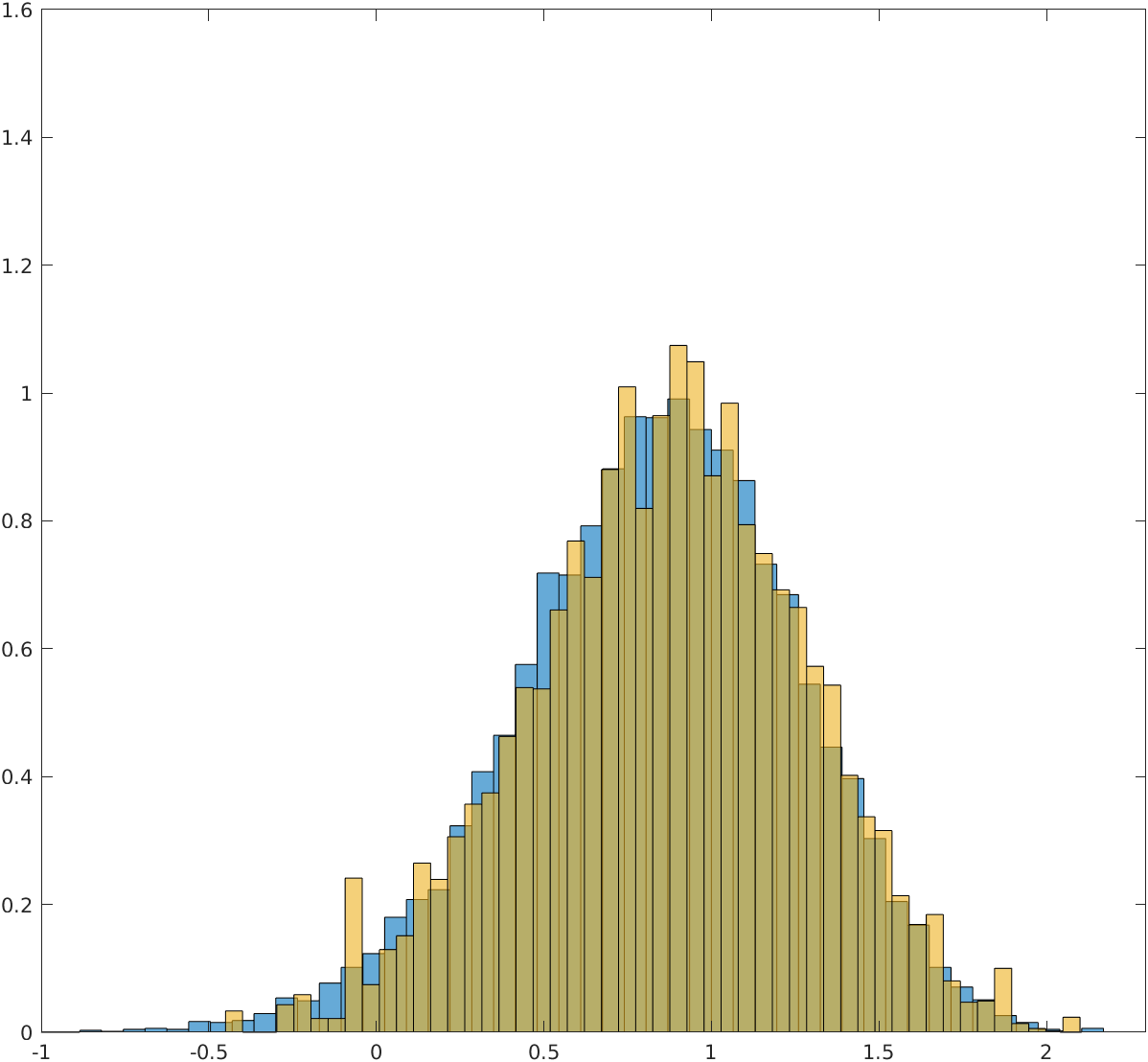}
\label{fig:shift3}
\end{subfigure}
\caption{Histograms of the first Fourier component of the nonlinear process $X_{t}$ (Blue), Gaussian process with shift $Z_{t}$ (red), and Gaussian process after the operation of reweighting \eqref{eq:reweighting} (yellow), in the polynomial bounded case \eqref{eq:polynomialbounded}. Differently from the case without shift we see that the histograms in blue and yellow are almost superimposed.}
\label{fig:shift2+shift3}
\end{figure}

\begin{figure}[t]
\begin{subfigure}{0.49\textwidth}
\includegraphics[width=\textwidth]{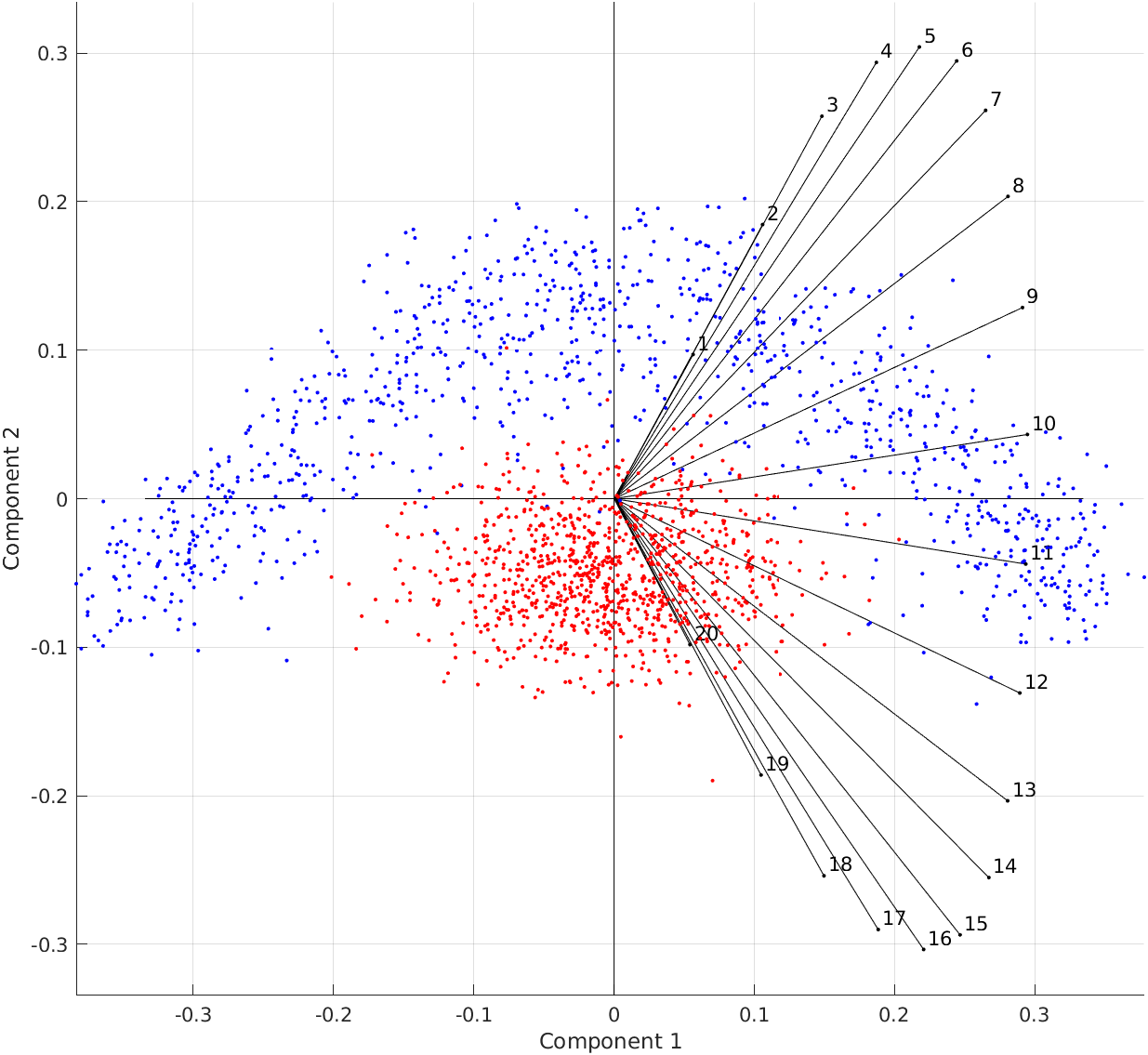}
\label{fig:banana1}
\end{subfigure}
\begin{subfigure}{0.49\textwidth}
\includegraphics[width=\textwidth]{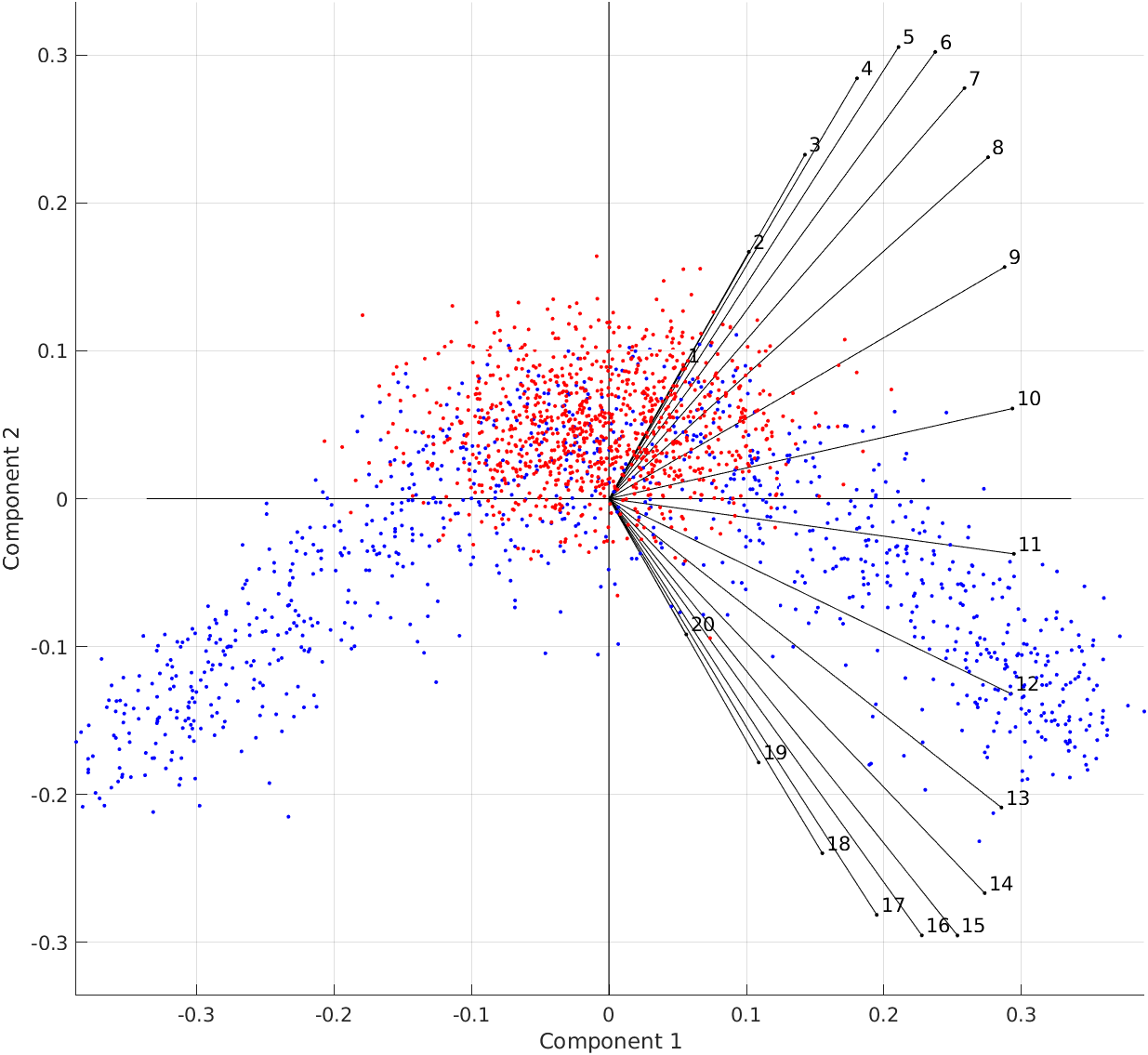}
\label{fig:banana2}
\end{subfigure}
\caption{Principal component analysis for the samples of the Dyadic model \eqref{eq:dyadicmodel} in dimension $d = 20$. The nonlinear process $X_{t}$ is colored in blue, while the Gaussian process in red. On the left we see the result without the addition of the shift on the Gaussian process, while it is used on the right hand side. We see that in both pictures, the initial approximation given by the Gaussian, is too poor to allow an accurate reconstruction of the target distribution.}
\label{fig:banana1+banana2}
\end{figure}

\section{Appendix}

For readers' convenience, let us provide some proofs of the theoretical
claims made above. We first prove Lemma \ref{thm-1} for which we need some
preparations. Recall that we work on the Euclidean space $\mathbb{R}^{d}$, and
$A$ (resp. $Q$) is a self-adjoint and strictly negative (resp. positive)
matrix of order $d$. Let $F\in C\big([0,T]\times\mathbb{R}^{d}, \mathbb{R}^{d}
\big)$ be a time-dependent continuous vector field; we consider the second
order differential operator: for $f\in C_{b}^{2}(\mathbb{R}^{d})$,
\[
L_{t} \phi(x) = \frac12 \mathrm{Tr}(Q D^{2} \phi)(x) + \<Ax + F(t,x),
D\phi(x)\>, \quad(t,x ) \in[0,T]\times\mathbb{R}^{d}.
\]
Assume that the following SDE
\[
\mathrm{d} X_{t}= (AX_{t} + F(t,X_{t}))\,\mathrm{d} t + \sqrt{Q}\, \mathrm{d}
W_{t}, \quad t\geq s,\ X_{s}=x
\]
has a unique strong solution denoted by $X^{x}_{s,t}$; let $\{T_{s,t}\}_{0\leq
s < t\leq T}$ be the associated Markov semigroup:
\[
T_{s,t} \phi(x)= \mathbb{E} \phi(X^{x}_{s,t}), \quad x\in\mathbb{R}%
^{d},\ 0\leq s<t \leq T,\ \phi\in\mathcal{B}(\mathbb{R}^{d}).
\]
Here $\mathcal{B}(\mathbb{R}^{d})$ is the space of bounded measurable
functions on $\mathbb{R}^{d}$. There are more general results than the
following one, but it is sufficient for our purpose (see \cite{DPL, Krylov,
KryPri} for some other related results).

\begin{lemma}
\label{append-lem-1} Assume that $F\in C^{0,1}_{b}\big([0,T]\times
\mathbb{R}^{d}, \mathbb{R}^{d} \big)$. Then for any $\phi\in\mathcal{B}%
(\mathbb{R}^{d})$, one has $T_{s,t} \phi\in C_{b}^{2}(\mathbb{R}^{d})$ for all
$0\leq s <t \leq T$, and the following equation holds in the classical sense:
\[
\frac{\partial}{\partial s} T_{s,t} \phi(x)= -L_{s} (T_{s,t} \phi
)(x),\quad(s,x)\in[0,t)\times\mathbb{R}^{d}.
\]
Moreover, if $\phi\in C_{b}^{2}(\mathbb{R}^{d})$, then we also have
\[
\frac{\partial}{\partial t} T_{s,t} \phi(x)= T_{s,t} (L_{t}\phi)(x),\quad
(t,x)\in(s,T]\times\mathbb{R}^{d} .
\]

\end{lemma}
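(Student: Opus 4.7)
The plan is to split the statement into three tasks and handle them in increasing order of difficulty.

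First, for $\phi\in C_b^2(\mathbb{R}^d)$ the forward equation is a direct consequence of It\^o's formula applied to $\phi(X_{s,\cdot}^x)$:
\begin{equation*}
\phi(X_{s,t}^x) = \phi(x) + \int_s^t L_r\phi(X_{s,r}^x)\,\mathrm{d}r + \int_s^t\langle D\phi(X_{s,r}^x),\sqrt{Q}\,\mathrm{d}W_r\rangle.
\end{equation*}
The stochastic integral is a true martingale because $D\phi$ is bounded, so taking expectations yields
\begin{equation*}
T_{s,t}\phi(x) = \phi(x) + \int_s^t T_{s,r}(L_r\phi)(x)\,\mathrm{d}r,
\end{equation*}
and continuity of $r\mapsto T_{s,r}(L_r\phi)(x)$ (inherited from continuity of $F$ in $t$ and the Feller property of $T_{s,r}$) allows differentiation in $t$ to obtain the forward equation.

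Second, the crucial step is the regularity $T_{s,t}\phi\in C_b^2(\mathbb{R}^d)$ for merely bounded measurable $\phi$. I would proceed via the Girsanov transformation against the Gaussian reference process $Z$ with dynamics $\mathrm{d}Z_t = AZ_t\,\mathrm{d}t+\sqrt{Q}\,\mathrm{d}W_t$. Boundedness of $F$ secures Novikov's condition, so
\begin{equation*}
T_{s,t}\phi(x) = \mathbb{E}\bigl[\phi(Z_{s,t}^x)\,\mathcal{E}_{s,t}(x)\bigr],
\end{equation*}
where $\mathcal{E}_{s,t}(x)$ is the exponential martingale built from $Q^{-1/2}F(r,Z_{s,r}^x)$. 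The dependence on $x$ enters only through the affine map $x\mapsto Z_{s,t}^x = e^{(t-s)A}x + W_A(s,t)$, so exactly as in the proof of Lemma \ref{lem-derivative} one can transport derivatives onto the explicit Gaussian density of $W_A(s,t)$; this yields an integration-by-parts representation of $DT_{s,t}\phi$ that does not require $\phi$ to be differentiable. Iterating the device, and using $F\in C^{0,1}_b$ to bound the $x$-derivatives of $\mathcal{E}_{s,t}$, produces an analogous formula for $D^2T_{s,t}\phi$; the resulting estimates depend only on $\|\phi\|_\infty$, on bounds for $F$ and $DF$, and on the norm of $Q_{t-s}^{-1/2}e^{(t-s)A}$, so $T_{s,t}\phi\in C_b^2(\mathbb{R}^d)$ with continuity of the derivatives by dominated convergence.

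Finally, the backward equation follows from the semigroup property once the $C_b^2$ smoothing is established. For $0\leq s'<s<t$, the Markov property gives $T_{s',t}\phi(x) = \mathbb{E}[T_{s,t}\phi(X_{s',s}^x)]$, and since $T_{s,t}\phi\in C_b^2$ we may apply It\^o's formula to $T_{s,t}\phi(X_{s',\cdot}^x)$ exactly as in the first step, obtaining
\begin{equation*}
T_{s',t}\phi(x) - T_{s,t}\phi(x) = \int_{s'}^s T_{s',r}\bigl(L_r T_{s,t}\phi\bigr)(x)\,\mathrm{d}r.
\end{equation*}
Dividing by $s'-s$, letting $s'\to s$, and invoking continuity of $r\mapsto T_{s',r}(L_r T_{s,t}\phi)(x)$ yields $\partial_s T_{s,t}\phi(x) = -L_s(T_{s,t}\phi)(x)$ in the classical sense. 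The main obstacle is clearly the middle step: extracting $C_b^2$ smoothing from merely measurable data is where the non-degeneracy of $Q$ is essential, and controlling the second $x$-derivative via the Girsanov route requires a careful bootstrap built on $DF$ and on the inverse covariance $Q_{t-s}^{-1}$.
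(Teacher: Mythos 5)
Your first and third steps are sound and essentially match the paper's own treatment: the forward equation is obtained there too by applying It\^o's formula to $\phi(X^x_{s,t})$, and the backward equation via the semigroup identity $T_{s',t}\phi = T_{s',s}(T_{s,t}\phi)$ is a standard consequence once the regularity $T_{s,t}\phi\in C_b^2$ is available. Note, however, that the paper does not actually prove the hard middle step: it delegates both the $C_b^2$-smoothing for merely bounded measurable $\phi$ and the backward equation to \cite[Section 9.4.3]{DPZ}, whose argument runs through the mild (integral-equation) formulation $T_{s,t}\phi = S_{s,t}\phi + \int_s^t S_{s,r}\langle F(r,\cdot),DT_{r,t}\phi\rangle\,\mathrm{d}r$ and the gradient estimate $\|DS_{s,r}\psi\|_\infty\le C(r-s)^{-1/2}\|\psi\|_\infty$ for the Ornstein--Uhlenbeck semigroup, bootstrapped by a Gronwall-type argument. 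Your Girsanov route is therefore a genuinely different (and in principle viable) path to the same regularity.

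There is, however, a real gap in your middle step as written. You claim that after the Girsanov representation $T_{s,t}\phi(x)=\mathbb{E}[\phi(Z^x_{s,t})\,\mathcal{E}_{s,t}(x)]$ one can transport the $x$-derivative onto the Gaussian density of $W_A(s,t)$ ``exactly as in the proof of Lemma \ref{lem-derivative}.'' That lemma exploits the fact that $S_{s,t}\phi(x)$ depends on $x$ only through the \emph{law of the terminal point} $Z^x_{s,t}$, which is a finite-dimensional Gaussian, so the derivative becomes a shift of a density on $\mathbb{R}^d$. In your representation the weight $\mathcal{E}_{s,t}(x)$ is a functional of the \emph{entire path} $(Z^x_{s,r})_{r\in[s,t]}$ (it contains the stochastic integral $\int_s^t\langle Q^{-1/2}F(r,Z^x_{s,r}),\mathrm{d}W_r\rangle$), so the integrand is not a function of $Z^x_{s,t}$ alone and the finite-dimensional change-of-density argument does not apply. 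To make your step rigorous you need a path-space integration by parts (Cameron--Martin shift of the Wiener measure, i.e.\ a Malliavin/Bismut--Elworthy--Li argument), in which the derivative falling on $\phi(Z^x_{s,t})$ is converted into a Skorokhod integral and additional terms involving the Malliavin derivative of $\mathcal{E}_{s,t}(x)$ appear; these are controllable because $F\in C_b^{0,1}$, but they are not present in your sketch. The same issue recurs, with more bookkeeping, at the level of the second derivative. Either supply that path-space integration by parts explicitly, or switch to the mild-formulation bootstrap used in \cite{DPZ}, which needs only the finite-dimensional Lemma \ref{lem-derivative} applied to the Gaussian semigroup $S_{s,t}$.
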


\begin{proof}
Applying the It\^o formula to $\phi(X^x_{s,t})$, we immediately obtain the last assertion from the definition of the operator $L_t$; the details are omitted here. The proofs of other assertions can be found in \cite[Section 9.4.3]{DPZ}, where the authors deal with the infinite dimensional case with a time-independent nonlinear drift $F$.  Note that, in the current finite dimensional setting, the matrices $A$ and $Q$ are bounded operators, and it is easy to see that the assumptions like \cite[(9.48) and (9.50)]{DPZ} are satisfied. See \cite[Theorem 3.2.1]{Str-Var} for a related result, but there the drift coefficient is assumed to be bounded.
\end{proof}

Now we can give the

\begin{proof}[Proof of Lemma \ref{thm-1}]
Recall the semigroups $\{P_{s,t}\}_{0\leq s < t\leq T}$ and $\{S_{s,t}\}_{0\leq s < t\leq T}$ defined in the introduction. First, since $B_0\in C^{0,1}_b \big([0,T]\times \R^d\big)$, for any $\phi\in \mathcal{B}(\mathbb{R}^{d})$ and $0\leq s<t \leq T$, we have by Lemma \ref{append-lem-1} that $P_{s,t}\phi \in C_b^2(\R^d)$ and it solves
$$\frac{\partial}{\partial s} P_{s,t}\phi(x) + \frac12 {\rm Tr}(Q D^2 P_{s,t}\phi)(x) + \<Ax + B_0(s,x), D P_{s,t}\phi(x)\> =0, \quad x\in \R^d. $$
Next, the shift function $f\in C\big([0,T],\R^d\big)$ can be viewed as a space independent vector field on $\R^d$, thus for any $\varphi \in C_b^2(\R^d)$, the last assertion of Lemma \ref{append-lem-1} implies that
$$\frac{\partial}{\partial t} S_{s,t}\varphi(x) =S_{s,t}\bigg[ \frac12 {\rm Tr}(Q D^2\varphi)+ \<A\cdot + f(t), D\varphi\>\bigg](x) , \quad x\in \R^d.$$
Now for any fixed $0\leq s<t\leq T$ and $\phi\in \mathcal{B}(\mathbb{R}^{d})$, we consider $S_{s,r}(P_{r,t} \phi)$ for $r\in (s,t)$. Since $P_{r,t} \phi\in C_b^2(\R^d)$ for all $r<t$, we deduce from the above preparations that
$$\aligned
\frac{\partial}{\partial r} \big[S_{s,r}(P_{r,t} \phi)(x)\big] &= S_{s,r}\bigg[\frac12 {\rm Tr}(Q D^2P_{r,t}\phi)+ \<A\cdot + f(r), DP_{r,t}\phi\>\bigg](x) \\
&\quad + S_{s,r}\bigg[-\frac12 {\rm Tr}(Q D^2 P_{r,t}\phi) -\<A\cdot + B_0(r,\cdot), D P_{r,t}\phi\>\bigg](x).
\endaligned $$
Recalling that $B_0(r,x) = B(r,x) + f(r)$, we arrive at
$$\frac{\partial}{\partial r}\big[S_{s,r}(P_{r,t} \phi)(x)\big] =  - S_{s,r} \<B(r,\cdot), D P_{r,t}\phi\> (x).$$
Finally we integrate the variable $r$ on the interval $[s,t]$ and obtain
$$S_{s,t}\phi(x) = P_{s,t}\phi(x) - \int_s^t S_{s,r} \<B(r,\cdot), D P_{r,t}\phi\> (x)\,\d r$$
which is nothing but the formula in Lemma  \ref{thm-1}.
\end{proof}

In the remainder of this part we prove Lemma \ref{lem-derivative}.

\begin{proof}[Proof of Lemma \ref{lem-derivative}]
We follow the idea of \cite[Proposition 2.28]{DaPrato}. Note that, by \eqref{modified-Gaussian}, the random variable $Z^x_{s,t}$ has the Gaussian law $N_{e^{(t-s)A} x + F_{s,t}, Q_{t-s}}$ with the center $e^{(t-s)A} x + F_{s,t}\in \R^d$ and covariance matrix $Q_{t-s}$, thus
$$S_{s,t}\phi(x) = \E [\phi(Z^x_{s,t} )] = \int_{\R^d} \phi(y) N_{e^{(t-s)A} x + F_{s,t}, Q_{t-s}} (\d y) .$$
Under our assumptions, the matrix $Q_{t-s}$ is invertible. Therefore, denoting by $N_{Q_{t-s}}= N_{0,Q_{t-s}}$, the Gaussian distribution centered at the origin, we have
$$\aligned
\rho_{s,t}(x, y) := &\, \frac{\d N_{e^{(t-s)A} x + F_{s,t}, Q_{t-s}}}{\d N_{Q_{t-s}}} (y)\\
=&\, \exp\bigg\{ - \frac12 \big| Q_{t-s}^{-1/2} \big( e^{(t-s)A} x + F_{s,t} \big) \big|^2 + \big\< Q_{t-s}^{-1/2} \big(e^{(t-s)A} x + F_{s,t} \big), Q_{t-s}^{-1/2}y \big\> \bigg\}.
\endaligned $$
Using this notation we can write
$$S_{s,t}\phi(x) = \int_{\R^d} \phi(y)  \rho_{s,t}(x, y) N_{Q_{t-s}} (\d y). $$
As a result, for any $h\in \R^d$,
$$\aligned
&\, \frac{\d}{\d \eps}\Big|_{\eps=0} \big[ S_{s,t}\phi(x+\eps h) \big]  \\
=&\, \int_{\R^d} \phi(y) \rho_{s,t}(x, y) \Big[- \big\< Q_{t-s}^{-1/2} \big( e^{(t-s)A} x + F_{s,t} \big), \Lambda(t-s) h \big\> +  \big\< \Lambda(t-s) h, Q_{t-s}^{-1/2}y \big\> \Big]  N_{Q_{t-s}} (\d y) \\
=&\, \int_{\R^d} \phi(y) \rho_{s,t}(x, y) \big\< \Lambda(t-s) h, Q_{t-s}^{-1/2} \big(y - e^{(t-s)A} x - F_{s,t} \big) \big\> N_{Q_{t-s}} (\d y) .
\endaligned $$
Therefore,
$$\aligned
\big\<h, D(S_{s,t}\phi)(x) \big\> &= \int_{\R^d} \phi(y) \big\< \Lambda(t-s) h, Q_{t-s}^{-1/2} \big(y - e^{(t-s)A} x - F_{s,t} \big) \big\> N_{e^{(t-s)A} x + F_{s,t}, Q_{t-s}}(\d y) \\
&= \E\Big[ \phi(Z^x_{s,t}) \big\<\Lambda(t-s)h, Q_{t-s}^{-1/2} \big(Z^x_{s,t} - e^{(t-s)A} x - F_{s,t} \big) \big> \Big].
\endaligned $$
The proof is complete.
\end{proof}

\bigskip

\noindent \textbf{Acknowledgements.} The second author would like to thank the financial supports of the National Natural Science Foundation of China (Nos. 11688101, 11931004), and the Youth Innovation Promotion Association, CAS (2017003).

\end{document}